\documentclass{article}



\PassOptionsToPackage{numbers, compress}{natbib}
\usepackage[preprint]{neurips_2024}



\usepackage[utf8]{inputenc} 
\usepackage[T1]{fontenc}    
\usepackage{hyperref}       
\usepackage{url}            
\usepackage{booktabs} 
\usepackage{amsthm}
\usepackage{amsfonts}       
\usepackage{nicefrac}       
\usepackage{microtype}      
\usepackage{xcolor, comment}         

\usepackage{amsmath}
\usepackage{amssymb}
\usepackage{graphicx}
\usepackage{latexsym}
\usepackage{tabularx}
\usepackage[linesnumbered,ruled,vlined]{algorithm2e}
\usepackage{makecell}
\usepackage{multirow}
\usepackage{cleveref}
\usepackage{threeparttable}
\RequirePackage{endnotes}

\newcommand{\assign}{:=}

\newcommand{\nobracket}{}
\newcommand{\tmop}[1]{\ensuremath{\operatorname{#1}}}

\newenvironment{tmparmod}[3]{\begin{list}{}{\setlength{\topsep}{0pt}\setlength{\leftmargin}{#1}\setlength{\rightmargin}{#2}\setlength{\parindent}{#3}\setlength{\listparindent}{\parindent}\setlength{\itemindent}{\parindent}\setlength{\parsep}{\parskip}} \item[]}{\end{list}}

\newcounter{tmcounter}

\newtheorem{definition}{Definition}
\newtheorem{lemma}{Lemma}
\newtheorem{theorem}{Theorem}
\newtheorem{proposition}{Proposition}

\title{Restarted Primal-Dual Hybrid Conjugate Gradient Method for Large-Scale Quadratic Programming}

%

\author{Yicheng Huang\thanks{Shanghai University of Finance and Economics} \and
  Wanyu Zhang\footnotemark[1] \and
  Hongpei Li\footnotemark[1]
  \and Dongdong Ge\thanks{Shanghai Jiao Tong University} \and Huikang Liu\footnotemark[2] \and Yinyu Ye\thanks{Stanford University}
}

\begin{document}

\maketitle
\footnotetext[1]{Correspondence to: hkl1u@sjtu.edu.cn}

\begin{abstract}
Convex quadratic programming (QP) is an essential class of optimization problems with broad applications across various fields. Traditional QP solvers, typically based on simplex or barrier methods, face significant scalability challenges 
. In response to these limitations, recent research has shifted towards matrix-free first-order methods to enhance scalability in QP. Among these, the restarted accelerated primal-dual hybrid gradient (rAPDHG) method, proposed by \citep{lu2023practical}, has gained notable attention due to its linear convergence rate to an optimal solution and its straightforward implementation on Graphics Processing Units (GPUs). Building on this framework, this paper introduces a restarted primal-dual hybrid conjugate gradient (PDHCG) method, which incorporates conjugate gradient (CG) techniques to address the primal subproblems inexactly. We demonstrate that PDHCG maintains a linear convergence rate with an improved convergence constant and is also straightforward to implement on GPUs. Extensive numerical experiments on both synthetic and real-world datasets demonstrate that our method significantly reduces the number of iterations required to achieve the desired accuracy compared to rAPDHG. Additionally, the GPU implementation of our method achieves state-of-the-art performance on large-scale problems. In most large-scale scenarios, our method is approximately 5 times faster than rAPDHG and about 100 times faster than other existing methods. These results highlight the substantial potential of the proposed PDHCG method to greatly improve both the efficiency and scalability of solving complex quadratic programming challenges.

\end{abstract}

\section{Introduction}
In this paper, we consider the general convex quadratic programming (QP) problem, which can be formulated as:
\begin{equation}
    \min_{x \in \mathbb{R}^n} \quad  \frac{1}{2} x^T Q x + c^T x \qquad
\text{s.t.} \quad  A x \leq b, \label{eq:QP}
\end{equation}
where $c\in \mathbb{R}^n$, $b \in \mathbb{R}^m$, $A \in \mathbb{R}^{m \times n}$, and $Q \in \mathbb{R}^{n \times n}$ satisfying $Q \succeq 0$. Problem \eqref{eq:QP} is widely applied across various fields, including engineering, finance, management, and machine learning. In control theory, QP is essential for optimizing control actions to minimize quadratic cost functions, as seen in model predictive control (MPC) \citep{garcia1989model,morari1999model}. In finance, it underpins models such as the Markowitz portfolio selection \citep{markowitz1952portfolio} and the recently developed flow trading model in market design \citep{budish2023flow}. Machine learning applications of QP include support vector machines (SVM) \citep{hearst1998support}, Elastic Net \citep{zou2005regularization}, and quadratic discriminant analysis (QDA) \citep{tharwat2016linear}. Additionally, QP is a fundamental component in many optimization algorithms used in nonlinear programming, such as sequential quadratic programming (SQP) \citep{boggs1995sequential}, which sequentially approximate nonlinear objectives with quadratic functions. 
 
Traditional mainstream solutions for QP include the active-set method \citep{wolfe1959simplex}, and the interior point method (IPM) \citep{vanderbei1999loqo}. These methods deliver reliable, high-accuracy
solutions across various applications and are supported by commercial solvers  like MOSEK \citep{aps2019mosek}, GUROBI \citep{gurobi2021gurobi}, and COPT \citep{ge2022cardinal}, as well as the open-source solvers qpOASES \citep{ferreau2014qpoases} and Clarabel \citep{goulart2024clarabel}. However, their performance diminishes as problem dimensions increase due to the necessity for matrix factorization to solve linear systems. Additionally, matrix factorization, being essentially a sequential algorithm, does not integrate well with modern parallel
computing systems.



With the growing need to tackle large-scale problems and advancements in GPU technology, interest in matrix-free first-order methods (FOM) has increased significantly. SCS \citep{o2016conic}, an Alternating Direction Method of Multipliers (ADMM)-based solver, efficiently handles large-scale convex cone programs, and its latest version \citep{scs} can also address quadratic conic optimization. OSQP \citep{stellato2020osqp}, another ADMM-based solver for large-scale QP problems, achieves performance comparable to commercial solvers like MOSEK and GUROBI, and its GPU implementation, cuOSQP \citep{schubiger2020gpu}, further enhances the algorithm's scalability. Additionally, integrating interior-point methods into first-order frameworks has led to matrix-free solvers like ABIP \citep{lin2021admm,deng2022new}, which combine ADMM's scalability with IPM's stability. ABIP+ \citep{deng2022new} introduces further improvements, including restarts and preconditioning, to boost performance.

Additionally, a significant body of work has focused on the primal-dual hybrid gradient (PDHG) method, which has proven effective in GPU implementations and delivers impressive performance on large-scale problems. The initial application of PDHG in solving linear programs, called PDLP \citep{applegate2021practical}, incorporates a restart scheme to achieve improved numerical outcomes and has been shown to have linear convergence rates \citep{applegate2023faster}. Its GPU adaptations, such as cuPDLP.jl \citep{lu2023cupdlp} and cuPDLP.c \citep{lu2023cupdlpc}, demonstrate significant advantages over Interior-Point Method (IPM)-based and CPU solvers in handling large instances. Building on these developments, \citep{lu2023practical} extends these advancements to QP problems with the introduction of the restarted accelerated PDHG-based QP solver, rAPDHG, which maintains a linear convergence rate and outperforms other QP solvers in various benchmarks.

The contributions of this paper are summarized as follows. First, we introduce a first-order restarted primal-dual hybrid conjugate gradient (PDHCG) algorithm for solving convex QP problems. This algorithm combines the primal-dual hybrid gradient method with the conjugate gradient (CG) method to inexactly solve the primal sub-problem. Specifically, our inexact strategy includes two approaches: (1) PDHCG-adaptive, which employs an adaptive stopping strategy based on a designed accuracy threshold, and (2) PDHCG-fixed, which stops CG after a fixed number of steps. Additionally, we propose several implementation heuristics to further enhance the algorithm’s performance. Extensive numerical results demonstrate that our method significantly reduces the number of iterations
required to achieve the desired accuracy compared to rAPDHG.

Second, we establish the linear convergence of the proposed algorithm by leveraging the quadratic growth property of the smoothed duality gap. Additionally, we theoretically demonstrate that our method significantly reduces the number of iterations required to achieve the desired accuracy compared to rAPDHG. Specifically, as shown in \citep{lu2023practical}, the iteration complexity of rAPDHG depends on both $A$ and $Q$. In contrast, for PDHCG, the dependence of the outer loop iteration complexity on $Q$ is diminished. Although $Q$ still influences the inner-loop convergence of the CG method, the overall dependence on $Q$ is significantly reduced due to the linear convergence property of CG in PDHCG. Our numerical results further support these theoretical findings.

Third, we develop a GPU implementation of the proposed method, which achieves state-of-the-art performance in large-scale scenarios. The GPU implementation is capable of solving QP problems with $10^7$ variables and constraints, and up to $10^{10}$ nonzero elements within 100 seconds. Extensive numerical experiments on both synthetic and real-world datasets demonstrate that the GPU implementation of our method consistently outperforms others on large-scale problems. In most large-scale scenarios, our method is approximately 5 times faster than rAPDHG and about 100 times faster than other existing methods.

The rest of our paper is organized as follows. In \Cref{sec:algor}, we introduce the PDHCG algorithm for convex QP problems, outlining its motivation and advantages over rAPDHG. Then, in \Cref{sec:convergence}, we establish the linear convergence of the proposed algorithm by leveraging the quadratic growth property of the smoothed duality gap. The detailed proofs are provided in Appendices \ref{subsec:proof-adaptive} and \ref{subsec:proof-fixed}. In \Cref{sec:tech}, we introduce several implementation heuristics designed to further enhance the algorithm’s performance. \Cref{sec:exp} presents the empirical evaluation of our method on a variety of QP problems. Lastly, we conclude with closing remarks in \Cref{sec:remark}.

\section{Restarted PDHCG Method}\label{sec:algor}
In this section, we present the restarted primal-dual hybrid conjugate gradient (PDHCG) method for solving the QP problem \eqref{eq:QP}, and outline the motivation behind it as well as its advantages over rAPDHG \citep{lu2023practical}.

\subsection{PDHG-Type Methods}
We first consider the primal-dual formulation of \eqref{eq:QP}
\begin{equation}\label{eq:primal-dual-formulation}
    \min_x \max_{y\geq 0} \; \mathcal{L} (x, y) = \frac{1}{2} x^T
  Q x + c^T x + y^T A x - b^T y.
\end{equation}
The PDHG algorithm \citep{chambolle2011first} (also known as Chambolle-Pock algorithm) is aimed at solving the general convex-concave saddlepoint problem with bilinear term. One-step PDHG iteration is given by:
\[
\begin{aligned}
    x^{k+1}&=\arg \min_x \; \mathcal{L}(x, y^k) +\frac{1}{2\tau_k} \|x-x^k\|_2^2,\\
y^{k+1}&=\arg \max_{y\geq 0} \; \mathcal{L}(2x^{k+1}-x^k, y)- \frac{1}{2\sigma_k} \|y-y^k\|_2^2,
\end{aligned}
\]
where $\tau_k$ and $\sigma_k$ denote the primal and dual step-size. 
Note that the primal update involves solving an unconstrained QP problem, which requires solving a linear system using matrix factorization. In contrast, the dual update only involves a matrix-vector multiplication, making it more computationally efficient and well-suited for execution on GPU or distributed computing architectures.

The PDHG algorithm is known to have only sublinear convergence for general convex-concave saddlepoint problems \citep{chambolle2016ergodic}. However, recent work by \citep{applegate2023faster} demonstrates that a linear convergence rate can be achieved for linear programming (LP) using the sharpness condition and a restart scheme. Additionally, \citep{lu2023practical} applies the restart technique to PDHG for QP problems and also achieves linear convergence rate. Given the demonstrated success of restart scheme in both theory and practice, we incorporate this strategy into our PDHCG algorithm in this paper.

The procedure for the proposed restarted PDHCG algorithm is outlined in Algorithm \ref{algo:Adaptive-CG}. We denote the iterates at the $k$-th inner loop of the 
$n$-th outer loop as $(x^{n,k}, y^{n,k})$, and the running average within the $n$-th outer loop as $(\tilde{x}^{n, k}, \tilde{y}^{n, k})$. The inner loop follows the one-step PDHCG iteration  with primal-dual step size $(\sigma_k, \tau_k)$. The primal variable $x^{n,k}$ is updated using an inexact CG method with specific stopping criteria, which will be detailed later. The dual variable $y^{n,k}$ is updated via a standard PDHG dual step. After $K$ iterations, the inner loop terminates, and the next outer loop restarts using the running averages from the current inner loop.

\begin{algorithm}[t]
\DontPrintSemicolon
\SetAlgoLined
\SetInd{0.5em}{0.5em} 
\SetKwInput{KwInput}{Input}
\SetKwInput{KwOutput}{Output}
\SetKwInput{KwInitialize}{Initialize}
\SetKwFor{Repeat}{Repeat}{}{end} 
\SetKwFor{ForEach}{for each}{}{end} 

\KwInput{initial point $(x^{0, 0}, y^{0, 0})$, step-size $\{(\sigma_k, \tau_k)\}$, extrapolation factor $\theta_k$, restart frequency $K$, average weight $\{\beta_k\}$, sequence of stopping precision $\{\epsilon^k\}_{k=1}^{K}$.}
\KwInitialize{the inner loop. $(\tilde{x}^{n, 0}, \tilde{y}^{n, 0}) \leftarrow (x^{n, 0}, y^{n, 0})$;}
\Repeat{until $(x^{n, 0}, y^{n, 0})$ converges}{
    \ForEach{$k = 0, \ldots, K - 1$}{
        \Indp 
        $x^{k+1} \approx \arg\min _x    \frac{1}{2} x^T Q x + c^T x + (y^k)^T A x - b^T y^k + \frac{1}{2 \tau_k} \| x - x^k \|_2^2$ \;\label{eq:QP-x}
        Use CG method to solve the above problem until the stopping criteria are met\;
        $y^{k+1} = \text{proj}_{\mathbb{R}_+^m} \left\{ y^k - \sigma_k (A (2x^{k+1} - x^k) - b) \right\}$\label{eq:QP-y}\;
        $\tilde{y}^{n, k+1} = \left(1 - \beta_k^{-1}\right)\tilde{y}^{n, k} + \beta_k^{-1} y^{n, k+1}$\;
        $\tilde{x}^{n, k+1} = \left(1 - \beta_k^{-1}\right)\tilde{x}^{n, k} + \beta_k^{-1} x^{n, k+1}$\;
        \Indm 
    }
    Restart the outer loop. $(x^{n + 1, 0}, y^{n + 1, 0}) \leftarrow (\tilde{x}^{n, K}, \tilde{y}^{n, K})$, $n+1 \leftarrow n $.\;
}

\KwOutput{$(x^{n, 0}, y^{n, 0})$}
\caption{Restarted PDHCG}
\label{algo:Adaptive-CG}
\end{algorithm}

\textbf{CG stopping criterion.} Conjugate gradient method \citep{dongarra1991solving} is a matrix-free first-order algorithm for solving linear system that only involves matrix-vector multiplication, which makes it as an ideal method for solving large-scale linear system and unconstrained quadratic minimization problem. We stop CG algorithm either for a fixed number of iterations $N$, or select a series of stopping precision as
\[
    \|x^{n, k} - x_*^{n, k}\| \leq \varepsilon^k,
\]
where $x_*^{n, k}$ is the exact solution to the primal sub-problem at $k$-th iteration,
and $\{\varepsilon^k\}$ is a sequence of accuracy given by \eqref{stop-criteria}. According to \citep{greenbaum1997iterative}, CG method admits the linear convergence property, i.e., let $x^{n,k}_l, l =0,1,2, \dots$ denote the sequence generated by the CG method, then we have 
$$
\|x^{n,k}_l - x^{n,k}_*\| \leq 2 \|x^{n,k}_0 - x^{n,k}_*\| \cdot \left(\frac{\sqrt{\kappa}-1}{\sqrt{\kappa}+1}\right)^l,
$$
where $\kappa$ denote the condition number of the matrix $Q + \frac{1}{\tau_k} I$.  

\subsection{Motivation and Advantages over rAPDHG}
The rAPDHG algorithm \citep{lu2023practical} employs a linearization technique to handle the primal update, approximating the primal objective $\mathcal{L}(x, y^k)$ by performing a linear expansion at an intermediate point $x_{md}^k$. Specifically, rAPDHG uses the following primal update:
\[
x^{k+1} = \arg\min _x  \; \langle x, \nabla \mathcal{L}(x_{md}^k, y^k) \rangle + \frac{1}{2 \tau_k} \| x - x^k \|_2^2.
\]
However, the linearization in rAPDHG becomes highly inaccurate when $Q$ is poorly conditioned, which would significantly increase the iteration complexity of the algorithm. In contrast, the proposed PDHCG algorithm addresses this issue by employing the conjugate gradient method to solve the primal unconstrained QP problem more effectively. The motivation and advantages of our PDHCG method over rAPDHG are twofold:

\begin{table}[tb]
\centering
\caption{Comparison of the theoretical results between rAPDHG and PDHCG}
\begin{threeparttable}
\newcolumntype{C}[1]{>{\centering\arraybackslash}p{#1}}
\begin{tabular}{|c|C{0.45\textwidth}|C{0.25\textwidth}|}
\hline
\thead{Method} & \makecell{outer loop complexity} & \makecell{extra CG steps}\\
\hline
\thead{rAPDHG} &
$\mathcal{O}\left(\big(\|A\|+\sqrt{\|Q\|}+\frac{\|Q\|}{\|A\|}\big)\log\frac{1}{\epsilon}\right)$ & - \\
\hline
\thead{PDHCG-fixed} &
$\mathcal{O}\left(\big(\|A\|+\sqrt{\gamma_K^N \|Q\|}+\frac{\gamma_K^N\|Q\|}{\|A\|}\big)\log\frac{1}{\epsilon}\right)$ & $N$ \\
\hline
\thead{PDHCG-adaptive} &
$\mathcal{O}\left(\|A\|\cdot \log\frac{1}{\epsilon}\right)$ &
$\log_r \frac{\zeta}{  
    2(1 + \tau \|A \|)(1 + \tau \| Q\|) } $\\
\hline
\end{tabular}
\label{tab:theoretical-result}
\begin{tablenotes}
\footnotesize
\item [1] The constants $\gamma_K \in (0, 1)$, $r, \tau, \zeta$ will be specified later.
\end{tablenotes}
\end{threeparttable}
\end{table}

\textbf{Reduced dependency on $Q$.} Compared to rAPDHG \citep{lu2023practical}, our algorithm significantly reduces the dependency on $Q$.  Specifically, we theoretically demonstrate that for PDHCG-fixed (with $N$ CG steps per iteration), the dependence of the outer loop iteration count on $Q$ decreases exponentially with $N$ (see \Cref{thm:fix}). For PDHCG-adaptive, by carefully selecting the stopping criteria precision, the number of outer loop iterations becomes independent of $Q$ (see \Cref{thm:ada}). Although $Q$ still influences the inner-loop convergence of the CG method, the overall dependency on $Q$ is significantly reduced in PDHCG owing to the linear convergence property of CG. Additionally, we provide a theoretical justification for the adaptive CG strategy, showing that the number of iterations required for CG convergence is consistently bounded by a universal constant. This indicates that our PDHCG method closely approximates the efficiency of a single-loop algorithm. A comparison of these theoretical results is summarized in Table \ref{tab:theoretical-result}.



\begin{table}[t]
  \centering
  \caption{The runtime (s) on Random QP with different condition numbers}
\begin{threeparttable}
    \begin{tabular}{ccccccccc}
    \hline
\multirow{2}{*}{Condition} & \multicolumn{3}{c}{PDHCG(GPU)}& \multicolumn{2}{c}{rAPDHG(GPU)} &  SCS(GPU) & OSQP &Gurobi \\
           Number& Iteration & Time & CG extra & Iteration & Time  &Time &Time &Time\\
    \hline
    $10^0$& 406.54  & \textbf{1.74 } & 1375.84  & 795.62  & 2.00   & 4.66  & 439.58  &4574.38  \\
    $10^1$& 586.00  & \textbf{2.08 } & 4092.57  & 1943.55  & 3.11   & 10.36  & 432.65  &4074.02  \\
    $10^2$& 675.51  & \textbf{3.07 } & 7726.04  & 5462.20  & 5.72   & 43.75  & 435.40  &2165.79  \\
    $10^3$& 782.69  & \textbf{3.75 } & 11357.09  & 23041.33  & 17.66   & 127.69  & 432.03  &1743.56  \\
    $10^4$& 1087.15  & \textbf{5.68 } & 15733.61  & 58098.26  & 30.61   & t & 406.51  &1891.62  \\
    $10^5$& 1337.01  & \textbf{7.21 } & 21614.69  & f & f  & t & 396.84  &1606.98  \\
    \hline
    \end{tabular}%
    \label{condition_q}
    \begin{tablenotes}
\footnotesize
\item [1] "f " means the algorithm reaches time limit (7200s), "t" means the algorithm gets wrong solution.
\item [2] "CG extra" means the total number of CG iterations.
\end{tablenotes}
\end{threeparttable}
\end{table}%

To empirically validate our theoretical results, we compare the performance of various algorithms on randomly generated QP problems with varying condition numbers for the matrix $Q$ in the quadratic term. The results are summarized in Table \ref{condition_q}, where the problem size is set as $n = m = 50000$ and sparsity is $10^{-3}$. As shown in Table \ref{condition_q}, the ill-conditioning of $Q$ has a pronounced impact on rAPDHG and SCS, both of which suffer significant degradation in performance. In contrast, our PDHCG method demonstrates much greater resilience to ill-conditioning. A careful examination reveals that the primary difference between rAPDHG and PDHCG lies in the growth of outer loop iterations: for rAPDHG, the iterations increase rapidly, while for PDHCG, they remain within acceptable bounds. Besides, table \ref{condition_q} also shows that, while the condition number of $Q$ affects the inner loop complexity of CG method, the impact is manageable, allowing the overall computational cost to remain robust. Interestingly, both OSQP and Gurobi benefit from ill-conditioned $Q$, as these methods rely on matrix decompositions, where higher condition numbers can, in some cases, simplify the decomposition process.


\textbf{Low-rank acceleration of $Q$.} In various practical applications, such as portfolio optimization, the matrix $Q$ often exhibits a low-rank structure, represented as $Q = P P^T$ where $P \in \mathbb{R}^{n \times k}$ with $k \ll n$. Leveraging this low-rank property allows for significant acceleration of computational tasks, particularly matrix-vector multiplications involving $Qx$, which are required in the primal updates of both the PDHCG and rAPDHG algorithms. The extent of this acceleration, however, varies substantially between the two algorithms. In rAPDHG, the matrix-vector multiplication $Qx$ is performed once per iteration. In contrast, PDHCG requires $Qx$ to be computed within each inner iteration of the CG method, which constitutes the primary computational bottleneck of PDHCG. Consequently, the benefits of accelerating the computation of $Qx$ are more substantial for PDHCG.

To illustrate this difference, we conducted experiments where $Q = P P^T$ with increasing rank $k$, and we compared the performance of PDHCG and rAPDHG with and without the low-rank acceleration technique. As demonstrated in Table \ref{tab:PP-gpu} and Table \ref{tab:PP-cpu}, the speedup achieved by exploiting the low-rank structure of $Q$ is significantly more pronounced in the PDHCG algorithm than in rAPDHG, providing a distinct computational advantage for PDHCG in solving such problems.

\begin{table}[htb]
    \centering
    \begin{minipage}[t]{0.45\textwidth}
        \centering
        \caption{10 Random QP problems in GPU ,n = 1e7,sparsity = 1e-4, 'o' means out of memory.}
        \label{tab:PP-gpu}
        \resizebox{\textwidth}{!}{
        \begin{tabular}{c|ccc|ccc} 
		\hline
		\multirow{2}[4]{*}{rank} & \multicolumn{3}{c}{PDHCG} & \multicolumn{3}{c}{rAPDHG} \\
		\cmidrule{2-7}          & P'Px  & Qx    &Ratio  & P'Px  & Qx    &Ratio  \\
		\hline
		5000  & \bf{3.45}  & 5.74  & \bf{0.60}  & 13.96  & 19.65  & 0.71   \\
    10000 & \bf{3.70}  & 9.21  & \bf{0.40} & 14.64  & 19.82  & 0.74   \\
    20000 & \bf{4.31}  & 17.27  & \bf{0.25}& 15.53  & 28.81  & 0.54   \\
    50000 & \bf{5.52}  & 48.90  & \bf{0.11}& 16.21  & 43.54  & 0.37   \\
    100000 & \bf{7.10}& 105.71  & \bf{0.07}& 19.36  & 81.76  & 0.24   \\
    200000 & \bf{11.93}  & o& -     & 20.56  & o& -  \\
		\hline
	\end{tabular}}
    
    \end{minipage}%
    \hspace{0.05\textwidth}
    \begin{minipage}[t]{0.45\textwidth}
        \centering
        \caption{10 Random QP problems in CPU, n = 2e5,sparsity = 1e-3.}
        \label{tab:PP-cpu}
        \resizebox{\textwidth}{!}{
        \begin{tabular}{c|ccc|ccc} 
    \hline
    \multirow{2}[4]{*}{rank} & \multicolumn{3}{c}{PDHCG} & \multicolumn{3}{c}{rAPDHG} \\
\cmidrule{2-7}          & P'Px  & Qx    & Ratio & P'Px  & Qx    & Ratio \\
    \hline
    10    & 44.38  & \bf{27.33}  & 1.62  & 61.31  & 64.58  & \bf{0.95}  \\
    50    & \bf{24.77}  & 44.62  & \bf{0.56}  & 67.87  & 78.51  & 0.86  \\
    100   & \bf{36.89}  & 73.37  & \bf{0.50}  & 74.78  & 109.28  & 0.68  \\
    200   & \bf{19.32}  & 134.03  & \bf{0.14}  & 86.87  & 175.07  & 0.50  \\
    1000  & \bf{48.41}  & 504.33  & \bf{0.10}  & 94.69  & 524.76  & 0.18  \\
    2000  & \bf{33.99}  & 1351.99  & \bf{0.03}  & 84.28  & 767.83  & 0.11  \\
    \hline
    \end{tabular}}
    \end{minipage}
\end{table}
\section{Convergence Analysis} \label{sec:convergence}
In this section, we present theoretical guarantees for the linear convergence of \Cref{algo:Adaptive-CG} with both fixed and adaptive CG stopping strategies. 
To begin, we rewrite the minimax formulation \eqref{eq:primal-dual-formulation} for QP problems as a general saddle-point problem:
\begin{equation}\label{prob:general-form}
  \min_x \max_y G (x) +\langle A x, y \rangle - F (y) 
\end{equation}
where $G (x) = \frac{1}{2} x^T Q x + c^T x$ and $F (y) = b^T y
+\mathbb{I} (y \geq 0)$ are convex functions. 

Before presenting the formal convergence results, we introduce two progress metrics. The first one is the duality gap, a metric commonly used in the analysis of primal-dual algorithms \citep{chambolle2011first}.  Let $z = (x, y) \in \mathbb{R}^{n+m}$ and define the feasible set $\mathcal{Z} = \mathbb{R}^{n} \times \mathbb{R}^m_+$. Besides, denote by $\mathcal{Z}^{\ast}$ the set of optimal solutions to \eqref{prob:general-form}.

\begin{definition}[Duality Gap]
    For any $z, \hat{z} \in \mathcal{Z}$ with $z=(x,y)$ and $\hat{z}=(\hat{x}, \hat{y})$, let
  \[ 
  Q (z, \hat{z}) = \mathcal{L} (x, \hat{y}) - \mathcal{L} (\hat{x}, y), 
  \]
then $\max_{\hat{z} \in \mathcal{Z} } Q (z, \hat{z})$ represents the duality gap at $z$.
\end{definition}

The potential for an unbounded feasible region in QP problems may result in an infinite duality gap. To mitigate this issue and obtain more robust convergence guarantees, we adopt a novel progress metric introduced by \citep{fercoq2022quadratic}. This metric modifies the traditional duality gap by incorporating a smoothness penalty, ensuring the gap remains finite. The smoothed duality gap is crucial in proving the linear convergence of general restarted PDHG algorithms and has also been utilized in the analysis of rAPDHG \citep{lu2023practical}.

\begin{definition}[Smoothed Duality Gap \citep{fercoq2022quadratic}]
  For any $\xi \geq 0$ and $z, \dot{z} \in \mathcal{Z}$, the smoothed duality gap at $z$ centered at $\dot{z}$ is defined as
  \[ G_{\xi} (z, \dot{z}) \assign \max_{\hat{z} \in \mathcal{Z}} \{ Q (z,
     \hat{z}) - \xi \| \hat{z} - \dot{z} \|^2 \}. \]
\end{definition}
Moreover, \citep{fercoq2022quadratic} employs the smoothed duality gap to define a new regularity condition, which ensures that the smoothed duality gap exhibits quadratic growth with respect to the distance from the optimal solution. Problems satisfying this condition can be shown to achieve linear convergence. Additionally, \citep{lu2023practical} has validated that convex QP problems meet this quadratic growth condition, thereby guaranteeing linear convergence for these cases.
\begin{definition}[Quadratic Growth]
   Let $\xi > 0$ and $\mathbb{S}$ be any bounded set. We say that the smoothed duality gap satisfies quadratic growth on set $\mathbb{S}$ if there exists some $\alpha_{\xi} > 0$ such that, for all $z^{\ast} \in \mathcal{Z}^{\ast}$ and any $z \in \mathbb{S}$, 
  \[ G_{\xi} (z, z^{\ast}) \geq \alpha_{\xi} \tmop{dist}^2 (z,
     \mathcal{Z}^{\ast}). \]
\end{definition}

\vspace{3mm}
\subsection{PDHCG with Fixed Stopping Strategy}
For the PDHCG-fixed algorithm, where the maximum number of CG iterations is fixed at $N$, we show that, under mild assumptions on the lower bound of $N$, the outer-loop maintains a linear convergence rate.  Moreover, by leveraging the linear convergence rate of the CG method, we derive convergence results that exhibit an exponentially decaying dependence on $Q$.

\begin{theorem}[Convergence Result of PDHCG-fixed] \label{thm:fix}
  Let $\{ z^{n, 0} \}^{\infty}_{n = 0}$ denote the sequence generated by PDHCG-fixed with a fixed number of CG iterations $N \geq \log_{\gamma_K} \frac{1}{K}$ where $\gamma_K$ represents the convergence rate of CG in the $K$-th inner loop. Suppose for any $\xi > 0$, the function $\mathcal{L}$ exhibits quadratic growth with parameter $\alpha_{\xi}$ within a ball $B_R (z^{0, 0})$ centered at $z^{0, 0}$, where the radius $R = \frac{2}{1 - 1 / e} \tmop{dist} (z^{0, 0}, \mathcal{Z}^{\ast})$. Choose
  \[ \theta_k = \frac{k}{k+1}, \beta_k = k, \tau_k = \frac{k + 1}{2 (\gamma^N_K \| Q \| + K \| A \|)}, \sigma_k = \frac{k+1}{2 K\| A \|}, \]
  and ensure the restart frequency $K$ satisfies
  \[ K \geq \max \left\{ \frac{4 \| A \|}{\xi}, \sqrt{\frac{4 \gamma^N_K \| Q \|}{\xi}}, \frac{4 e^2 \| A \|}{\alpha_{\xi}}, \sqrt{\frac{4 e^2 \gamma^N_K \| Q \|}{\alpha_{\xi}}}, \frac{\gamma^N_K \| Q \|}{\| A \|} \right\}. \]
  Then, for any outer iteration $n$, the following holds:
  \begin{itemize}
      \item[(1)] The sequence $\{ z^{n, 0} \}$ is bounded and remains within $B_R (z^{0, 0})$:
  \[ \| z^{n, 0} - z^{0, 0} \| \leq \frac{2}{1 - 1 / e} \tmop{dist} (z^{0, 0}, \mathcal{Z}^{\ast}). \]
  \item[(2)] The sequence $\{ z^{n, 0} \}$ converges linearly to an optimal solution:
  \[ \tmop{dist} (z^{n, 0}, \mathcal{Z}^{\ast}) \leq e^{- n} \tmop{dist} (z^{0, 0}, \mathcal{Z}^{\ast}). \]
  \end{itemize}
\end{theorem}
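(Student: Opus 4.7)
The plan is to adapt the restarted PDHG analysis used for rAPDHG in \citep{lu2023practical}, with the new ingredient being a careful account of the inexact primal update performed by $N$ CG steps. The high-level architecture has three layers: (i) a per-inner-iteration descent inequality for the Lagrangian that now carries a CG residual, (ii) a telescoping/averaging argument over the $K$ inner steps that yields a smoothed-duality-gap bound at $\tilde{z}^{n,K}$, and (iii) an induction on the outer index $n$ that uses quadratic growth inside $B_R(z^{0,0})$ to convert the gap bound into a geometric contraction of $\tmop{dist}(z^{n,0},\mathcal{Z}^\ast)$ and simultaneously keeps the iterates inside $B_R$.

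First I would derive a one-step inequality, for any $\hat{z}\in\mathcal{Z}$, of the form
\[
\mathcal{L}(x^{k+1},\hat{y})-\mathcal{L}(\hat{x},y^{k+1}) \;\le\; \text{(quadratic terms in $\|z^k-\hat{z}\|$, $\|z^{k+1}-\hat{z}\|$)} \;+\; \text{(CG-error term)}.
\]
The standard PDHG identity for the exact primal minimizer $x_\ast^{k+1}$ holds with a curvature contribution of order $\tfrac12\|x_\ast^{k+1}-x^k\|_Q^2$. Replacing $x_\ast^{k+1}$ by the CG approximation introduces an error that, by the linear-convergence bound recalled just before the theorem, is at most $\gamma_K^N\|x^{n,k}_0-x_\ast^{k+1}\|$ up to a universal factor. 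Substituting this into the identity turns the effective curvature from $\|Q\|$ into $\gamma_K^N\|Q\|$, which matches exactly the step-size choice $\tau_k=(k+1)/(2(\gamma_K^N\|Q\|+K\|A\|))$. The hypothesis $N\ge\log_{\gamma_K}(1/K)$ is chosen so that $\gamma_K^N\le 1/K$, guaranteeing that the aggregated CG error stays under control across the $K$ inner iterations.

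Next I would telescope this one-step inequality across $k=0,\dots,K-1$ using the weights $\beta_k=k$ and extrapolation factors $\theta_k=k/(k+1)$. Jensen's inequality applied to the running average $\tilde{z}^{n,K}$, together with the matching step-size schedule, yields an estimate of the shape
\[
Q(\tilde{z}^{n,K},\hat{z}) \;\le\; \frac{C\bigl(\|A\|+\gamma_K^N\|Q\|\bigr)}{K^2}\,\|z^{n,0}-\hat{z}\|^2
\]
for a universal constant $C$. The listed conditions on $K$, namely $K\ge 4\|A\|/\xi$ and $K\ge\sqrt{4\gamma_K^N\|Q\|/\xi}$ along with their $\alpha_\xi$ and $e^2$ counterparts, are precisely what absorbs the right-hand coefficient into the smoothing penalty $\xi\|\hat{z}-z^{n,0}\|^2$ and then, via the quadratic-growth hypothesis applied on $B_R(z^{0,0})$, into an $e^{-2}\alpha_\xi$ contraction factor. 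Maximizing over $\hat{z}$ to pass to $G_\xi(\tilde{z}^{n,K},z^{n,0})$ and invoking quadratic growth gives the per-outer-loop contraction
\[
\tmop{dist}(z^{n+1,0},\mathcal{Z}^\ast) \;\le\; e^{-1}\,\tmop{dist}(z^{n,0},\mathcal{Z}^\ast).
\]

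Finally, claims (1) and (2) are established by a joint induction on $n$. The inductive step also requires a per-outer-loop movement bound of the form $\|z^{n+1,0}-z^{n,0}\|\le 2\,\tmop{dist}(z^{n,0},\mathcal{Z}^\ast)$, which follows from the same telescoped estimate combined with the triangle inequality against any projection onto $\mathcal{Z}^\ast$. Summing geometrically then gives $\|z^{n,0}-z^{0,0}\|\le 2\sum_{k<n}e^{-k}\tmop{dist}(z^{0,0},\mathcal{Z}^\ast)\le R$, keeping the iterates in $B_R(z^{0,0})$ and thereby preserving the applicability of quadratic growth at the next step. The main obstacle I anticipate is the one-step inequality under inexact minimization: the CG residual couples into the dual update through the extrapolated primal $2x^{k+1}-x^k$, producing cross terms with $\sigma_k\|A\|$ and $\|y^{k+1}-\hat{y}\|$ that must be dominated by the existing curvature and proximal terms rather than inflating the coefficient of $\|z^{n,0}-\hat{z}\|^2$. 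Controlling these cross terms via Young's inequality with weights aligned to $\tau_k$ and $\sigma_k$, and then verifying that $\gamma_K^N\le 1/K$ is strong enough to cancel the $\|Q\|$-dependence from the outer-loop count in the telescoped bound, is the most delicate part of the argument.
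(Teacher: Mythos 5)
Your proposal follows essentially the same route as the paper's proof: a one-step inexact PDHG inequality in which the CG error, controlled via the linear convergence of CG and the hypothesis $\gamma_K^N \le 1/K$, effectively replaces the curvature $\|Q\|$ by $\gamma_K^N\|Q\|$; a weighted telescoping over the $K$ inner steps to bound the (smoothed) duality gap at the averaged iterate; and an outer induction that combines quadratic growth on $B_R(z^{0,0})$ with a per-restart movement bound to get the geometric contraction. The only slip is in your displayed telescoped estimate, where the $\|A\|$ contribution should decay as $\|A\|/K$ rather than $\|A\|/K^2$ (only the $\gamma_K^N\|Q\|$ part enjoys the $K^{-2}$ rate); since the conditions you invoke, $K \ge 4\|A\|/\xi$ and $K \ge 4e^2\|A\|/\alpha_\xi$, are exactly the ones matching the correct $\|A\|/K$ rate, the remainder of your argument is unaffected.
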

The convergence result of PDHCG-fixed confirms that, under specific conditions, the algorithm consistently converges to an optimal solution. This reliability is achieved through careful parameter selection and adequate restart frequency, which ensure iterate boundedness and a linear convergence rate. Such guarantees affirm the algorithm’s efficacy in solving optimization problems characterized by quadratic growth properties. The detailed proof of Theorem \ref{thm:fix} is provided in Appendix \ref{subsec:proof-fixed}.

\subsection{PDHCG with Adaptive Stopping Strategy}
For each inner loop, we fix the primal and dual step size $\sigma = \tau = \frac{1}{2 \| A \|}$, and define the sequence of precision of CG as 
 \begin{equation}\label{stop-criteria}
        \varepsilon^1  = \frac{\zeta \| y^{n,1} - y^{n,0} \|}{1 + \tau \| Q\|},\quad 
    \varepsilon^k  = \varepsilon^{k-1} + \frac{\zeta \| z^{n, k-1} - z^{n, k-2} \|}{1 + \tau \| Q\|}\quad \text{for}\quad  2 \leq k \leq K,
\end{equation}
where $\zeta$ is some positive constant satisfying $2 \zeta \leq  \min \left\{ \frac{1 - \sqrt{\sigma \tau} \| A \|}{2 \sqrt{\sigma
     \tau} K^2}, \frac{1 - \sqrt{\sigma \tau} \| A \|}{2 \tau K^2} \right\}.$
Intuitively, as the iteration sequence approaches the optimum, we progressively relax the stopping conditions, resulting in fewer CG iterations being required in the later stages. The following theorem asserts that the PDHCG-adaptive algorithm, with these adaptive stopping criteria, guarantees linear convergence. Notably, this approach removes the dependence on $\|Q\|$, which is present in standard PDHG, thus improving scalability and efficiency.

\begin{theorem}[Convergence Result of PDHCG-adaptive] \label{thm:ada}
  Let $\{ z^{n, 0} \}^{\infty}_{n = 0}$ denote the sequence generated by PDHCG-adaptive with target CG precision as defined in \eqref{stop-criteria}. Suppose for any $\xi > 0$, the function $\mathcal{L}$ exhibits quadratic growth with parameter $\alpha_{\xi}$ within a ball $B_R (z^{0, 0})$ centered at $z^{0, 0}$, where the radius $R = \frac{3}{1 - 1 / e} \tmop{dist} (z^{0, 0}, \mathcal{Z}^{\ast})$. Choose $\theta_k = 1$, $\beta_k = k$, primal and dual step sizes as $\sigma = \tau = \frac{1}{2 \| A \|}$, and ensure that the restart frequency $K$ satisfies
  \[ K \geq \max \left\{ \frac{4 \| A \|}{\xi}, \frac{2 e^2 \| A
     \|}{\alpha_{\xi}} \right\}. \]
  Then, for any outer iteration $n$, the following holds:
  \begin{itemize}
      \item[(1)] The sequence $\{ z^{n, 0} \}$ is bounded and remains within $B_R (z^{0, 0})$:
  \[ \| z^{n, 0} - z^{0, 0} \| \leq \frac{3}{1 - 1 / e} \tmop{dist}
     (z^{0, 0}, \mathcal{Z}^{\ast}). \]
    \item[(2)] The sequence $\{ z^{n, 0} \}$ converges linearly to an optimal solution:
  \[ \tmop{dist} (z^{n, 0}, \mathcal{Z}^{\ast}) \leq e^{- n} \tmop{dist}
     (z^{0, 0}, \mathcal{Z}^{\ast}). \]
  \end{itemize}
\end{theorem}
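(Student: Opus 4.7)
The plan is to mirror the standard restart-PDHG-with-quadratic-growth template used for rAPDHG, but to inject an inexactness analysis that exploits the particular form of the adaptive precision sequence in \eqref{stop-criteria}. I would work inside a single outer loop (fixing $n$) and proceed in four stages: a one-step inexact descent inequality, a telescoped bound on the smoothed duality gap at the averaged iterate, conversion to a distance bound via quadratic growth, and induction on $n$.

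First, for a single inner iteration, let $x_*^{k+1}$ denote the exact minimizer of the primal subproblem in line \ref{eq:QP-x} of \Cref{algo:Adaptive-CG}, and write $x^{k+1} = x_*^{k+1} + e^{k+1}$ with $\|e^{k+1}\| \le \varepsilon^k$. Starting from the classical Chambolle--Pock inequality for the exact iterate $(x_*^{k+1},y^{k+1})$, I would substitute $x_*^{k+1} = x^{k+1} - e^{k+1}$ and carefully expand the cross terms. The choice $\sigma=\tau=\tfrac{1}{2\|A\|}$ makes $1 - \sqrt{\sigma\tau}\|A\| = 1/2 > 0$, so the exact analysis gives a strict contraction in the $\sigma,\tau$-weighted norm; the error terms produced by $e^{k+1}$ are of order $\varepsilon^k$ times quantities like $\|z^{n,k+1}-z^{n,k}\|$ and $(1+\tau\|Q\|)\varepsilon^k$. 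This is exactly where the denominator $1+\tau\|Q\|$ in \eqref{stop-criteria} plays its role: it cancels the $\|Q\|$ factor that would otherwise appear when bounding $\langle Q e^{k+1}, \cdot\rangle$, and replaces it by the purely $\|A\|$-controlled quantity $\zeta\|z^{n,k-1}-z^{n,k-2}\|$.

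Next, multiplying the one-step inequality by $\beta_k = k$ and summing over $k=0,\ldots,K-1$, I would telescope the squared-distance terms in the Chambolle--Pock potential. The non-telescoping error terms form a sum $\sum_{k} \beta_k \varepsilon^k (\cdot)$, which by construction of $\varepsilon^k$ becomes a sum of $\|z^{n,j}-z^{n,j-1}\|$ pieces; using Cauchy--Schwarz and the constraint on $\zeta$ (which is calibrated so that $2\zeta K^2$ is dominated by $\tfrac{1-\sqrt{\sigma\tau}\|A\|}{2\sqrt{\sigma\tau}}$), these error pieces are absorbed into the strictly positive telescoping distance terms. Dividing by $\sum_k \beta_k = \Theta(K^2)$ and taking the maximum over $\hat z$ in a ball, I would obtain
\[
   G_{\xi}(\tilde z^{n,K}, z^\ast) \;\le\; \frac{C\|A\|}{K^2}\,\mathrm{dist}^2(z^{n,0},\mathcal Z^\ast),
\]
for any $z^\ast \in \mathcal Z^\ast$ and appropriate $\xi$. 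This is the analogue of Proposition~3.2 of \citep{lu2023practical}, but crucially with no $\|Q\|$ in the numerator.

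Finally, an inductive argument on $n$ finishes both conclusions simultaneously. Assume $z^{n,0} \in B_R(z^{0,0})$; quadratic growth applied to $z^* \in \mathcal Z^\ast$ attaining the distance gives $G_\xi(\tilde z^{n,K}, z^\ast) \ge \alpha_\xi \,\mathrm{dist}^2(\tilde z^{n,K},\mathcal Z^\ast)$, and combining with the previous display and the hypothesis $K \ge 2e^2\|A\|/\alpha_\xi$ yields $\mathrm{dist}(z^{n+1,0},\mathcal Z^\ast) \le e^{-1} \mathrm{dist}(z^{n,0},\mathcal Z^\ast)$. Iterating gives the geometric decay in (2). For boundedness (1), the triangle inequality plus a geometric sum $\sum_{j\ge 0} e^{-j} = \frac{1}{1-1/e}$, together with $\|z^{n+1,0}-z^{n,0}\| \le 2\,\mathrm{dist}(z^{n,0},\mathcal Z^\ast) + \text{(an extra error from inexactness controllable by }\zeta\text{)}$, yields the radius $R = \frac{3}{1-1/e}\mathrm{dist}(z^{0,0},\mathcal Z^\ast)$ (the extra $1$ in the numerator $3$ versus $2$ in \Cref{thm:fix} accommodates the additional CG-error slack).

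The main obstacle is the second stage: propagating the adaptive errors through the weighted telescoping sum without letting $\|Q\|$ reappear in the final rate. The bookkeeping is delicate because $\varepsilon^k$ is \emph{itself} expressed in terms of earlier iterate differences, so the error-absorbing inequality is implicit; one must reorder the double sum $\sum_k \beta_k \sum_{j\le k} \|z^{n,j}-z^{n,j-1}\|$ and verify that the resulting coefficients are uniformly controlled by $\zeta K^2$, which is precisely the purpose of the upper bound imposed on $\zeta$. Once this absorption is established, the remaining steps reduce to standard quadratic-growth-plus-restart arguments as in \citep{fercoq2022quadratic,lu2023practical}.
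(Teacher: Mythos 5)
Your overall skeleton --- an inexact one-step Chambolle--Pock inequality with error terms $\delta^{k+1}=(I+\tau\partial G)(x^{k+1}-x_*^{k+1})$, absorption of the accumulated errors into the telescoped distance terms via the constraint on $\zeta$ (including the Abel-type reordering of $\sum_k\langle\delta^k,x-x^k\rangle$), a smoothed-duality-gap bound at the averaged iterate, and then quadratic growth plus induction on $n$ --- is the paper's route (Lemmas \ref{lemma:ada-1}--\ref{lemma:ada-3}). The boundedness step also matches: part 2) of \Cref{lemma:ada-2} gives $\|z^{n+1,0}-z^{\ast}_n\|\le 2\,\tmop{dist}(z^{n,0},\mathcal Z^{\ast})$, whence the factor $3$ by the triangle inequality and the radius $\frac{3}{1-1/e}\tmop{dist}(z^{0,0},\mathcal Z^{\ast})$.

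There is, however, a genuine error in your second stage. You claim that multiplying the one-step inequality by $\beta_k=k$, summing, and dividing by $\sum_k\beta_k=\Theta(K^2)$ yields $G_{\xi}(\tilde z^{n,K},z^{\ast})\le \frac{C\|A\|}{K^2}\,\tmop{dist}^2(z^{n,0},\mathcal Z^{\ast})$. This $O(1/K^2)$ rate is not attainable in the adaptive setting: with $\theta_k=1$ and \emph{constant} step sizes $\sigma=\tau=\frac{1}{2\|A\|}$, the weighted terms $\frac{k}{2\tau}\|x-x^{k+1}\|^2$ produced at step $k$ do not telescope against the $\frac{k+1}{2\tau}\|x-x^{k+1}\|^2$ needed at step $k+1$; the leftover $\frac{1}{2\tau}\|x-x^{k+1}\|^2$ deficits can only be covered if $G$ is strongly convex (here $Q\succeq 0$ only) or if the step sizes grow with $k$ --- which is precisely the mechanism reserved for PDHCG-fixed (\Cref{thm:fix}), where $\tau_k\propto k+1$ and the gap bound does carry a $K^{-2}$ factor in front of $\|Q\|$. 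For PDHCG-adaptive the correct average is the plain one, $\tilde z=\frac1K\sum_{k=1}^K z^k$, and the correct bound is $G_{\xi}(\tilde z,\dot z)\le\frac{2\|A\|}{K}\|z^{n,0}-\dot z\|^2$ (\Cref{lemma:ada-3}); the restart threshold $K\ge 2e^2\|A\|/\alpha_{\xi}$ in the statement is calibrated exactly to this $1/K$ rate (it gives $\frac{2\|A\|}{\alpha_{\xi}K}\le e^{-2}$), whereas a genuine $1/K^2$ rate would have produced a $\sqrt{\|A\|/\alpha_{\xi}}$-type requirement and a different entry in Table \ref{tab:theoretical-result}. Replacing your weighted summation by an unweighted one and passing to the average via convexity of $Q(\cdot,z)$ in its first argument repairs the argument; the remaining stages then go through as you describe.
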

Furthermore, we can prove that the number of CG iterations is upper-bounded by a dimension-free constant when the stopping precision is selected according to \eqref{stop-criteria}. This ensures that the computational overhead associated with CG iterations remains manageable. The detailed proofs of Theorem \ref{thm:ada} and Proposition \ref{cor:ada} are provided in Appendix \ref{subsec:proof-adaptive}.

\begin{proposition}[Upper Bound for Adaptive-CG Iteration] \label{cor:ada}
Under the choice of stopping criteria \eqref{stop-criteria}, the total number of iterations required in the inexact CG step is bounded by the constant $\log_r \frac{\zeta}{2  
    (1 + \tau \|A \|)(1 + \tau \| Q\|) } $, where $r = \frac{\sqrt{\kappa} - 1}{\sqrt{\kappa} + 1}$ and $\kappa$ represents the condition number of $Q + \frac{1}{2 \tau} I$.
\end{proposition}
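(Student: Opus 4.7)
The plan is to combine the linear convergence of CG with the adaptive tolerance \eqref{stop-criteria} to show that only a dimension-free number of inner CG steps are needed regardless of $k$. Given the CG bound $\|x^{n,k}_l - x^{n,k}_*\| \leq 2\|x^{n,k}_0 - x^{n,k}_*\|\,r^l$ with $r = \frac{\sqrt{\kappa}-1}{\sqrt{\kappa}+1}$ and the natural warm start $x^{n,k}_0 = x^{n,k-1}$, a sufficient condition for the stopping criterion $\|x^{n,k}_l - x^{n,k}_*\| \leq \varepsilon^k$ is
\[
r^l \leq \frac{\varepsilon^k}{2\|x^{n,k-1} - x^{n,k}_*\|},
\]
which, because $r<1$ makes $\log_r(\cdot)$ decreasing, is equivalent to $l \geq \log_r \frac{\varepsilon^k}{2\|x^{n,k-1} - x^{n,k}_*\|}$. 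It therefore suffices to prove a bound of the form $\|x^{n,k-1} - x^{n,k}_*\| \leq \frac{(1+\tau\|A\|)(1+\tau\|Q\|)}{\zeta}\varepsilon^k$, which after substitution produces exactly the claimed universal bound $\log_r\frac{\zeta}{2(1+\tau\|A\|)(1+\tau\|Q\|)}$.

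To obtain this bound, I would first split via the triangle inequality,
\[
\|x^{n,k-1} - x^{n,k}_*\| \leq \|x^{n,k-1} - x^{n,k-1}_*\| + \|x^{n,k-1}_* - x^{n,k}_*\| \leq \varepsilon^{k-1} + \|x^{n,k-1}_* - x^{n,k}_*\|,
\]
using the stopping criterion at the previous step. For the remaining term, I would write the first-order optimality conditions of two consecutive primal subproblems, $(Q + \frac{1}{\tau}I)x^{n,k}_* = \frac{1}{\tau}x^{n,k-1} - c - A^T y^{n,k-1}$ and similarly for $x^{n,k-1}_*$, subtract them, and invoke $\|(Q + \frac{1}{\tau}I)^{-1}\| \leq \tau$ (valid because $Q \succeq 0$) to conclude
\[
\|x^{n,k-1}_* - x^{n,k}_*\| \leq \|x^{n,k-1} - x^{n,k-2}\| + \tau\|A\|\,\|y^{n,k-1} - y^{n,k-2}\| \leq (1+\tau\|A\|)\,\|z^{n,k-1} - z^{n,k-2}\|.
\]

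Finally, I would invert the recursive definition \eqref{stop-criteria} into $\|z^{n,k-1} - z^{n,k-2}\| = \frac{(1+\tau\|Q\|)(\varepsilon^k - \varepsilon^{k-1})}{\zeta}$ and substitute, obtaining
\[
\|x^{n,k-1} - x^{n,k}_*\| \leq \varepsilon^{k-1} + \frac{(1+\tau\|A\|)(1+\tau\|Q\|)}{\zeta}(\varepsilon^k - \varepsilon^{k-1}) \leq \frac{(1+\tau\|A\|)(1+\tau\|Q\|)}{\zeta}\,\varepsilon^k,
\]
where the last step uses that the admissible range of $\zeta$ forces $\frac{(1+\tau\|A\|)(1+\tau\|Q\|)}{\zeta} \geq 1$, making the coefficient of $\varepsilon^{k-1}$ nonpositive. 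Plugging back into the iteration count gives the desired uniform bound.

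The main obstacle is the base case $k=1$, where there is no previous exact subproblem solution to subtract against, so the telescoping argument above does not directly apply. One must instead relate $\|x^{n,0} - x^{n,1}_*\|$ to $\|y^{n,1} - y^{n,0}\|$ via the optimality condition of the first primal subproblem together with the form of the first dual update, which is precisely why \eqref{stop-criteria} chooses $\varepsilon^1$ proportional to $\|y^{n,1} - y^{n,0}\|$. Once this base case is matched to the same constant $\frac{(1+\tau\|A\|)(1+\tau\|Q\|)}{\zeta}\varepsilon^1$, the uniform bound on the inner CG iteration count asserted in the proposition follows for every $k$.
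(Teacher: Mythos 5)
Your proposal follows essentially the same route as the paper's proof: linear convergence of CG from the warm start $x^{n,k-1}$, a triangle-inequality split of $\|x^{n,k-1}-x^{n,k}_*\|$ into $\varepsilon^{k-1}$ plus the difference of consecutive exact subproblem solutions, nonexpansiveness of the resolvent $(Q+\tfrac{1}{\tau}I)^{-1}$ to get the factor $(1+\tau\|A\|)\|z^{n,k-1}-z^{n,k-2}\|$, and the recursion \eqref{stop-criteria} to eliminate $\|z^{n,k-1}-z^{n,k-2}\|$; your reorganization of the final algebra (bounding the distance by $\tfrac{(1+\tau\|A\|)(1+\tau\|Q\|)}{\zeta}\varepsilon^k$ rather than lower-bounding the ratio directly) is equivalent, and both versions rely on the same implicit fact $\zeta\leq(1+\tau\|A\|)(1+\tau\|Q\|)$. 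Your explicit attention to the base case $k=1$ is a detail the paper glosses over, but it does not change the argument.
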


\section{Algorithm Enhancement Process}\label{sec:tech}
In this section, we introduce several heuristic techniques aimed at enhancing the performance of our algorithm. These techniques are designed to improve both the convergence speed and the computational efficiency of the algorithm, particularly in large-scale problem instances. \\
\textbf{Progress metric.} We utilize the relative KKT error as the progress metric to compare the performance of different algorithms. This metric is consistent with those used in \citep{applegate2021practical} and \citep{lu2023practical}. For a solution \( z = (x, y) \in \mathcal{Z}\), the relative primal residual, dual residual, and primal-dual gap for the problem in \eqref{eq:primal-dual-formulation} are defined as follows:
\[
r_{\text{primal}} = \frac{\| [Ax - b]^+ \|_\infty}{1 + \max\{\|Ax\|_\infty, \|b\|_\infty\}},
\]
\[
r_{\text{dual}} = \frac{\|Qx + A^T y + c\|_\infty}{1 + \max\{\|Qx\|_\infty, \|A^T y\|_\infty, \|c\|_\infty\}},
\]
\[
r_{\text{gap}} = \frac{|x^T Qx + c^T x + b^T y|}{1 + \max\left\{\left| \frac{1}{2} x^T Qx + c^T x \right|, \left| \frac{1}{2} x^T Qx + b^T y \right|\right\}}.
\]
The relative KKT error is defined as the maximum of these three terms:
\[
\text{relKKT}(z) = \max\{r_{\text{primal}}, r_{\text{dual}}, r_{\text{gap}}\}.
\]
The algorithm terminates when the relative KKT error is smaller than the termination tolerance \(\epsilon\), i.e., \(
\text{relKKT}(z) \leq \epsilon. 
\) We consider two relative KKT error levels: \(\epsilon = 10^{-3}\) as low accuracy and \(\epsilon = 10^{-6}\) as high accuracy.\\
\textbf{Augmented penalty method.} Let \( Gx = q \) represent the equality constraints within the system \( Ax \leq b \). To further enhance the iteration process, we incorporate the augmented penalty term \(\|Gx - q\|_2^2\) into the objective function. Then, the minimax formulation becomes:
\[
    \min_x \max_{y \geq 0}  \mathcal{L} (x, y) \assign \frac{1}{2} x^T
  Q x + c^T x + \frac{\rho}{2}\|Gx - q\|_2^2 + y^T A x - b^T y 
\]
where the parameter $\rho$ is chosen as \(\rho =\frac{0.1\|Q\| _2}{\|G^T G \| _2} \). It is important to note that incorporating the augmented penalty term can increase the sparsity of the problem. Therefore, for large-scale problems with significant sparsity, we set $\rho = 0$, as the penalty term might otherwise hinder performance by introducing unnecessary complexity.\\
\textbf{Preprocessing.} First-order methods (FOMs) may suffer from slow convergence when solving real-world instances due to their ill-conditioned nature. To mitigate this, we use the diagonal preconditioning heuristic developed in \citep{applegate2021practical} for linear programming to rescale the problems and improve the condition number. Specifically, we rescale the matrix \(A\) to \(\tilde{A} = D_1 A D_2\) with positive definite diagonal matrices \(D_1\) and \(D_2\). The vectors \(b\) and \(c\) are correspondingly rescaled to \(\tilde{b} = D_1 b\) and \(\tilde{c} = D_2 c\). Additionally, we modify \( Q \) to \( Q + \rho G^T G \) and  rescale it to \(D_2 (Q + \rho G^T G) D_2\). The matrices \( D_1 \) and \( D_2 \) are obtained by running 10 steps of Ruiz scaling followed by a Pock-Chambolle scaling on the matrix \(\begin{pmatrix} Q + \rho G^T G & A^T \\ A & 0 \end{pmatrix}\). \\
\textbf{Switching between Barzilai-Borwein (BB) and CG algorithms.} In our implementation, we first check whether box constraints are present. If no box constraints are detected, we utilize the CG algorithm to solve the primal sub-problem. On the other hand, if box constraints are present, the Barzilai-Borwein (BB) algorithm is selected, as described in \citep{dai2005projected}. The BB algorithm is a projected gradient method that employs a specific step-size to ensure efficient convergence, particularly in constrained optimization scenarios. Specifically, the updating rule of BB algorithm is given by:
\[
x^{n,k+1}=\text{proj}_{x \in \mathcal{X}} \left(x^{n,k}-\frac{1}{\alpha_{n,k}}g_{n,k} \right), \quad \text{with} \quad \alpha_{n,k}=\frac{s^T _{n,k-1}y_{n,k-1}}{s^T _{n,k-1}s_{n,k-1}} \]
where \(g_{n,k}\) is the gradient at \(x_{n,k}\), \(s_{n,k-1}=x^{n,k}-x^{n,k-1}\) and \(y_{n,k-1}=g_{n,k}-g_{n,k-1}\). During each inner loop, the stopping criterion for CG is set as \( \|x^{n,k} - x^{n,k-1}\| \leq \epsilon_k \) where
\[
\epsilon_0 = 0 \quad \text{and} \quad \epsilon_k = \epsilon_{k-1} + 0.05 \text{relKKT}(z^{n,k -1}).
\]
\textbf{Adaptive restart.} We primarily adopt the same restarting strategy as described in Section 6 of \citep{lu2023practical}, with the parameter settings adjusted to suit our specific problem:  
\[\beta_{\text{sufficient}} = 0.2, \, \beta_{\text{necessary}} = 0.8, \text{ and } \beta_{\text{artificial}} = 0.2.
\]
\textbf{Adaptive step-size.} 
Our adaptive step-size strategy is inspired by the approach used in PDLP \citep{applegate2021practical}, with a novel step-size limitation specifically tailored for QP problems:
\[
\eta \leq \frac{\| z^{n,k + 1} - z^{n,k} \|^2_w}{2(x^{n,k + 1} - x^{n,k})^T A^T (y^{n,k + 1} - y^{n,k}) + (x^{n,k + 1} - x^{n,k})^T (Q + \rho G^T G) (x^{n,k + 1} - x^{n,k}) }.
\]
\textbf{Primal weight adjustment.} For the adaptive primal weight, we update the primal weight only at the beginning of each new epoch, using a weight factor of \(\theta = 0.2\). The update follows the rule:
\[
\texttt{PWeight}(z^{n,0}, z^{n-1,0}, \omega^{n-1}) :=
\begin{cases}
\exp \left( \theta \log \left( \frac{\Delta y^n}{\Delta x^n} \right) + (1 - \theta) \omega^{n-1} \right), & \Delta x^n, \Delta y^n > \epsilon_{\text{zero}} \\
\omega^{n-1}, & \text{otherwise,}
\end{cases}
\]
where \( \Delta x^n = \|x^{n,0} - x^{n-1,0}\|_2 \) and \( \Delta y^n = \|y^{n,0} - y^{n-1,0}\|_2 \).
  
\section{Numerical Experiments}\label{sec:exp}
In this section, we introduce PDHCG.jl, a prototype QP solver based on the PDHCG algorithm, available on GitHub\footnote{The code is available on \href{https://github.com/Huangyc98/PDHCG.jl.git}{https://github.com/Huangyc98/PDHCG.jl}}. It is important to note that PDHCG.jl is a preliminary implementation in Julia, primarily aimed at demonstrating the potential of this method. We compare the numerical performance of PDHCG against several popular QP solvers: rAPDHG \citep{lu2023practical}, SCS \citep{o2016conic}, OSQP \citep{stellato2020osqp}, and COPT \citep{ge2022cardinal}.

This section is organized into three parts. In \Cref{Large-scale synthetic QP}, we generate seven types of synthetic QP problems with dimensions ranging from \(10^3\) to \(10^7\) to evaluate the scalability and performance of our algorithm on large-scale problems. In \Cref{realworldqp}, we test the solvers on large-scale, real-world datasets from LIBSVM \citep{chang2011libsvm} and the SuiteSparse Matrix Collection \citep{davis2011university}, transforming these datasets into standard Lasso problems for a comparative performance analysis. Finally, in \Cref{twobenchmark}, we assess the stability of our algorithm on small to medium-sized problems by evaluating it on standard convex QP datasets from the Maros–Mészáros benchmark \citep{Maros1999qp} and QPLIB relaxations \citep{furini2019qplib}.

\textbf{Computing environment.} 
We conduct our experiments on a high-performance computing cluster. Specifically, we utilize an NVIDIA H100 GPU with 80GB HBM3 memory, running CUDA version 12.4, for GPU-based solvers. The CPU used is an Intel Xeon Platinum 8469C operating at 2.60 GHz, equipped with 512 GB of RAM. For all experiments, we allocate 32 CPU cores and the experiments are executed using Julia version 1.10.3.

\textbf{Solvers.} 
PDHCG.jl is implemented as an open-source Julia module, utilizing CUDA.jl as the interface for running on NVIDIA CUDA GPUs within Julia. We compare PDHCG.jl against both the CPU and GPU versions of rAPDHG \citep{lu2023practical}, the indirect and direct GPU and CPU versions of SCS \citep{o2016conic}, the CPU version of OSQP \citep{stellato2020osqp}, and the commercial solver COPT \citep{ge2022cardinal}. It is worth noting that rAPDHG, SCS, and OSQP are first-order QP solvers, while COPT is a second-order solver. Besides, SCS (Direct) and OSQP require matrix factorization as part of their solution process. 

\subsection{Large-scale Synthetic QP}
\label{Large-scale synthetic QP}
In this section, we present numerical results for seven artificially generated problem sets: Random QP, Equality constrained QP, Portfolio optimization, Optimal control problem, Lasso, SVM, and Huber regression. These problem types are widely recognized and commonly applied in real-world scenarios. To evaluate and compare the scalability and performance of algorithms on large-scale problems, we generated these types of QP problems with dimensions ranging from $10^3$ to $10^7$. The generation of matrices and coefficients for these problems, unless otherwise specified, follows the same procedure as described in OSQP \citep{stellato2020osqp}. 


\vspace{2mm}
\textbf{Random QPs.}
We first consider the general QP model:
\[\begin{array}{cc}
  \min & \frac{1}{2}x^T Q x + c^T x\\
  s.t. & {lb} \leqslant {Ax} \leqslant {ub},
\end{array}\]
where the problem instances have \( n \) variables and \( n \) constraints. To ensure the quadratic term is positive semidefinite, we generate the matrix \( Q = PP^T + \alpha I \), where \( P \in \mathbb{R}^{ n \times 1000} \) with sparsity $10^{-4}$ and regularization term \(\alpha = 10^{-2}\). For problems with equality constraints, we simply set \( ub = lb \).

\begin{table}[!ht]
	\centering
 \caption{Solving time (s) over 10 Random QP problems}
 \begin{threeparttable}
	\begin{tabular}{c|cc|cc|ccc|c|c} 
        \hline
	\multirow{2}[4]{*}{n} & \multicolumn{2}{c|}{PDHCG} &        
        \multicolumn{2}{c|}{rAPDHG} & \multicolumn{3}{c|}{SCS} & OSQP  & GUROBI \\
		\cmidrule{2-10}          & GPU   & CPU   & GPU   & CPU   & GPU   & ID CPU & D CPU & CPU   & CPU \\
		\hline
		1E+03 & 1.08  & 0.69  & 1.51  & 0.69  & 0.29  & 0.30  & 0.00  & 0.01  & \textbf{0.00} \\
		1E+04 & 2.13  & 1.60  & 2.74  & 1.30  & 1.66  & 2.64  & 0.19  & 0.08  & 0.11  \\
		1E+05 & \textbf{4.89} & 30.82  & 6.97  & 36.14  & 10.44  & 81.89  & f     & f     & 360.00  \\
		1E+06 & \textbf{5.44} & 817.82  & 20.46  & 3494.74  & 1957.14  & f     & f     & f     & f \\
		1E+07 & \textbf{19.57} & f     & 124.25  & f     & f     & f     & f     & f     & f \\
		\hline
	\end{tabular}%
 \label{randomqp_1}
 \begin{tablenotes}
\footnotesize
\item [1] "f " means the algorithm reaches time limit (3600s).
\end{tablenotes}
\end{threeparttable}
\end{table}%

\vspace{-8mm}
\begin{table}[!ht]
  \centering
  \caption{Solving time (s) over 10 Random QP problems with equality constraints}
    \begin{tabular}{c|cc|cc|ccc|c|c}
    \hline
    \multirow{2}[4]{*}{n} & \multicolumn{2}{c|}{PDHCG} & \multicolumn{2}{c|}{rAPDHG} & \multicolumn{3}{c|}{SCS} & OSQP  & COPT \\
    \cmidrule{2-10}          & GPU   & CPU   & GPU   & CPU   & \multicolumn{1}{c}{GPU} & ID CPU & D CPU & CPU   & CPU \\
    \hline
    1E+03 & 0.72  & 0.50  & 0.75  & 0.55  & \multicolumn{1}{c}{0.04 } & 0.01  & 0.06  & \textbf{0.01 } & 0.02 \\
    1E+04 & 1.62  & 0.96  & 1.53  & 1.14  & \multicolumn{1}{c}{3.97 } & 1.84  & 0.13  & \textbf{0.01 } & 0.02 \\
    1E+05 & \textbf{1.73} & 24.37  & 1.84  & 19.28  & \multicolumn{1}{c}{7.39 } & 3.56  & f     & f     & 59.93 \\
    1E+06 & \textbf{3.02} & 1076.39  & 13.55  & f     & \multicolumn{1}{c}{1611.21 } & 560.00  & f     & f     & f \\
    1E+07 & \textbf{26.97} & f     & 191.72  & f     & \multicolumn{1}{c}{f}    & f     & f     & f     & f \\
    \hline
    \end{tabular}%
  \label{randomqp_eq}%
\end{table}%

Tables \ref{randomqp_1} and \ref{randomqp_eq} present the geometric mean of solving times (in seconds) for 10 randomly generated QP problems and 10 randomly generated equality-constrained QP problems, respectively. For small-scale problems (with dimensions \(n = 10^3, 10^4\)), solvers such as COPT, OSQP, and SCS (Direct) outperform others due to the lower computational costs associated with low-dimensional matrix decomposition. However, as the problem dimension exceeds \(10^5\),  these solvers struggle to solve the problems due to the high computational cost of matrix decomposition. In contrast, the computational advantage of GPU-based and matrix-free algorithms becomes evident as the problem dimensions surpass \(10^5\). For large-scale problems with dimensions \(n = 10^6\) and \(10^7\), the GPU version of PDHCG is approximately 5 times faster than the GPU version of rAPDHG and over 100 times faster than the other solvers.

Table \ref{sparsity} presents the results for 10 randomly generated QP problems with 
$n=10^4$ and varying levels of sparsity in both the constraint matrix $A$ and the objective matrix $Q$. As the sparsity decreases (i.e., the problems become denser), the solving time for all solvers increases. However, PDHCG exhibits the smallest increase in solving time, indicating that it is more efficient and scales better on denser problems compared to the other solvers.

\begin{table}[!ht]
	\centering
 \caption{Solving time (s) on Random QP with different sparsity of $A$ and $Q$}
	\begin{tabular}{c|c|c|ccc}
		\hline
		Sparsity & PDHCG(GPU) & rAPDHG(GPU) & SCS(GPU) & OSQP  & GUROBI \\
		\hline
		1E-04 & 2.41  & 2.00  & 3.17  & 0.22  & \textbf{0.03 } \\
		1E-03 & \textbf{2.02} & 2.43  & 6.14  & 413.64  & 9.62  \\
		1E-02  & \textbf{2.54} & 4.63  & 20.69  & 682.29  & 160.06  \\
		1E-01   & \textbf{5.27} & 28.49  & 37.66  & 1115.06  & 240.73  \\
		1E+00     & \textbf{8.39} & 128.51  & 52.56  & f     & 623.17  \\
		\hline
	\end{tabular}
 \label{sparsity}%
\end{table}

\vspace{2mm}

\textbf{Portfolio Optimization.} Portfolio selection is a well-known problem in finance that aims to allocate assets in a way that maximizes the risk-adjusted return. The standard formulation is
\[
\begin{aligned}
& \text{maximize} \quad \mu^T x - \gamma \left( x^T \Sigma x \right) \\
& \text{subject to} \quad 1^T x = 1, \, x \geq 0,
\end{aligned}
\]
where $\mu$  represents the expected returns, $\Sigma$ is the covariance matrix of asset returns, and $\gamma$ is the risk aversion parameter. The risk model is typically expressed as $\Sigma = F F^T + D$ where $F \in \mathbb{R}^{n \times k}$ captures factor loadings and $D$ is a diagonal matrix of specific risks. To simplify the problem, we introduce a new variable \( y = F^T x \) and reformulate the problem as
\[
\begin{aligned}
& \text{minimize} \quad x^T D x + y^T y - \gamma^{-1} \mu^T x \\
& \text{subject to} \quad y = F^T x, \, 1^T x = 1, \, x \geq 0.
\end{aligned}
\]
In the experiment, we set the risk aversion parameter $\gamma = 1$ and generated portfolio problems with the number of assets $n$ ranging from $10^3$ to $10^7$ and the number of factors $k = n$.

\begin{table}[ht]
	\centering
 \caption{Solving time(s) on 10 Portfolio problems} 
	\begin{tabular}{c|cc|cc|ccc|c|c}
		\hline
		\multirow{2}[2]{*}{n} & \multicolumn{2}{c|}{PDHCG} & \multicolumn{2}{c|}{rAPDHG} & \multicolumn{3}{c|}{SCS} & OSQP   & COPT \\
		\cmidrule{2-10} & GPU   & CPU   & GPU   & CPU   & GPU & ID CPU & D CPU & CPU   & CPU \\
		\hline
	1E+03  & 0.64  & 0.53  & 0.73  & 0.54  & 0.26  & 0.25  & 0.17  & 0.64  & \textbf{0.04 } \\
    1E+04 & 1.37  & 1.63  & 2.10  & 4.35  & 5.71  & 7.26  & 0.67  & 17.61  & \textbf{0.21 } \\
    1E+05 & \textbf{1.70 } & 33.55  & 3.60  & 58.30  & 172.42  & 113.59  & f     & 402.85  & 2364.76  \\
    1E+06 & \textbf{8.25 } & 2309.99  & 39.58  & f     & f     & f     & f     & f     & f \\
    1E+07 & \textbf{92.29 } & f     & 346.38  & f     & f     & f     & f     & f     & f \\
		\hline
	\end{tabular}%
	\label{Portfolio}%
\end{table}%

Table \ref{Portfolio} summarizes the geometric mean of solving times for different solvers on 10 randomly generated portfolio optimization problems. The results are consistent with those observed for the Random QP problems: solvers utilizing matrix decomposition, such as OSQP and COPT, perform well on small-scale problems. However, for large-scale cases, the GPU version of PDHCG significantly outperforms the others, being approximately 5 times faster than the GPU version of rAPDHG and more than 100 times faster than the other solvers.

\vspace{2mm}

\textbf{Model Predictive Control.}
The problem of controlling a constrained linear time-invariant dynamical system can be formulated as an optimization problem. The objective is to minimize a cost function that accounts for both the system's state and control effort over a finite time horizon:
\[
\begin{aligned}
    & \text{minimize} \quad x_T^T Q_T x_T + \sum_{t=0}^{T-1} \left( x_t^T Q x_t + u_t^T R u_t \right) \\
    & \text{subject to} \quad x_{t+1} = A x_t + B u_t \\
    & \quad \quad \quad \quad \quad x_t \in \mathcal{X}, \; u_t \in \mathcal{U} \\
    & \quad \quad \quad \quad \quad x_0 = x_{\text{init}},
\end{aligned}
\]
where $x_t$ and $u_t$ represent the state and control variables, respectively, at time $t$. 
In the experiment, to ensure \( Q \) and \( Q_T \) are positive semi-definite, we generate \( Q \) using the formula \( Q = P P^T + \alpha I \), and set \( Q_T = Q \). The matrices \( A \), \( B \), and \( R \) are generated following the procedure outlined in OSQP, and we vary the time horizon \( T \) to control the problem's dimensionality.

\begin{table}[!ht]
  \centering
  \caption{Solving time (s) over 10 MPC problems}
  \begin{threeparttable}
    \begin{tabular}{c|cc|cc|ccc|c|c}
    \toprule
    \multirow{2}[4]{*}{n} & \multicolumn{2}{c|}{PDHCG} & \multicolumn{2}{c|}{rAPDHG} & \multicolumn{3}{c|}{SCS} & OSQP  & COPT \\
\cmidrule{2-10}          & GPU   & CPU   & GPU   & CPU   & GPU   & ID CPU & D CPU & CPU   & CPU \\
    \midrule
    1E+03 & 2.55  & 0.82  & 3.14  & 0.96  & t     & 18.93  & 0.06  & \textbf{0.00 } & 0.13  \\
    1.E+04 & 4.02  & 7.98  & 5.50  & 9.36  & t     & 21.41  & 0.16  & \textbf{0.03 } & 0.78  \\
    1E+05 & 4.91  & 43.83  & 8.57  & 89.41  & t     & 22.42  & 1.41  & \textbf{0.39 } & 8.18  \\
    1E+06 & \textbf{7.52 } & 782.43  & 12.23  & 1360.88  & t     & 52.10  & 17.27  & 7.95  & 90.00  \\
    1E+07 & \textbf{37.97 } & f     & 68.09  & f     & t     & 322.91 & 138.56  & 96.72  & 1210.50  \\
    \bottomrule
    \end{tabular}%
  \label{mpc}%
   \begin{tablenotes}
\footnotesize
\item [1] "t " means numerical error in solving process .
\end{tablenotes}
\end{threeparttable}
\end{table}%



Table \ref{mpc} presents the geometric mean of solving times across 10 model predictive control (MPC) problems evaluated using various solvers, with problem dimensions ranging from \(10^3\) to \(10^7\). Unlike previous synthetic problems, MPC instances are structured such that both the objective matrix and the constraint matrix are block diagonal. This structure allows solvers like OSQP, COPT, and SCS to handle problems up to \(10^7\) dimensions.  While OSQP performs well for smaller problem sizes (\(10^3\) and \(10^4\)), the GPU-based PDHCG solver dominates for larger problems, outperforming rAPDHG by 1.5x and delivering at least a twofold improvement over other solvers in the \(10^6\) and \(10^7\) range.

\vspace{2mm}

\textbf{Lasso, SVM snd Huber Regression.}
Lasso, SVM, and Huber Regression are well-known optimization problems that can be reformulated as standard quadratic programming problems, where the quadratic term $Q$  is the identity matrix. This structure makes these problems less suited for both the PDHCG and rAPDHG methods. For these three problem types, we set the number of constraints \( m = n \), the sparsity at \( 10^{-4} \), and the penalty parameter $\lambda = 0.01$. 

\begin{table}[ht]
  \centering
  \caption{Solving time (s) over 10 Lasso, SVM and Huber regression problems}
    \begin{tabular}{c|cc|cc|ccc|c|c}
    \hline
    \multirow{2}[4]{*}{n} & \multicolumn{2}{c|}{PDHCG} & \multicolumn{2}{c|}{rAPDHG} & \multicolumn{3}{c|}{SCS} & OSQP  & COPT \\
\cmidrule{2-10}          & GPU   & CPU   & GPU   & CPU   & GPU   & ID CPU & D CPU & CPU   & CPU \\
    \hline
    1E+03 & 0.82  & 0.73  & 0.76  & 0.74  & 0.28  & 1.03  & 0.04  & \bf{0.02}  & 0.04  \\
    1E+04 & 1.28  & 4.98  & 0.94  & 3.36  & 0.58  & 2.19  & 0.40  & \bf{0.40}  & 0.41  \\
    1E+05 & 1.09  & 39.97  & \bf{1.03}  & 16.91  & 14.26  & 20.76  & f     & 1734.80  & 579.50  \\
    1E+06 & \bf{3.65}  & 841.09  & 4.29  & 1272.09  & t     & 557.72  & f     & f     & f \\
    1E+07 & \bf{14.54}  & f     & 16.07  & f     & t     & f     & f     & f     & f \\
    \hline
    \end{tabular}%
  \label{lassosvmmpc}%
\end{table}%

Table \ref{lassosvmmpc} presents the results for 30 randomly generated LASSO, SVM, and Huber regression problems, with 10 instances each and problem dimensions ranging from \(10^3\) to \(10^7\). As seen in previous results, the GPU versions of PDHCG and rAPDHG demonstrate similar performance, with both showing a significant advantage over other solvers in large-scale cases.

\subsection{Real-world Large-scale QP problems} 
\label{realworldqp}
In this section, to further evaluate the capability of our algorithm in solving large-scale problems, we consider the Lasso problem, with data sourced from real-world datasets, specifically LIBSVM \citep{chang2011libsvm} and the SuiteSparse Matrix Collection \citep{davis2011university}. All selected problems contain more than 6,000,000 nonzero elements, with the largest problem containing nearly 400,000,000 nonzeros. The formulation of the Lasso problem is given by:
$$\begin{array}{ll}
  \min_x & \| A x - b \|_2^2 + \lambda \| x \|_1,
\end{array}$$
which is equivalent to the following QP problem when considering its dual form:
$$\begin{array}{ll}
\operatorname{min} & y^T y+\lambda \mathbf{1}^T t \\
\text { s.t. } & y=A x-b \\
& -t \leq x \leq t.
\end{array}$$
In the experiments, we set the last column of matrix $A$ as vector $b$, and we set the parameter $\lambda = 0.01||A^T b||_{\infty}$ for all test cases.

\begin{table}[htbp]
  \centering
   \caption{Solving time (s) over large-scale LASSO problems in LIBSVM and SuiteSparse Matrix Collection.}
   \resizebox{\textwidth}{!}{
   \begin{threeparttable}
    \begin{tabular}{c|ccr|ccccc}
    \toprule
    Problem & m     & n     & \multicolumn{1}{c|}{Sparsity} & PDHCG & rAPDHG  & SCS(GPU) & OSQP  & COPT \\
    \hline
    SLS   & 1,748,122 & 62,729 & 6.21E-05 & \textbf{3.35 } & 7.30  & 345.09  & 80.32  & 88.21  \\
    rcv1\_test & 677,399 & 47,236 & 1.55E-03 & \textbf{7.12 } & 19.54  & f     & f     & f \\
    avazu-site.tr & 23,567,843 & 1,000,000 & 1.50E-05 & \textbf{1377.54 } & 5124.82  & f     & f     & f \\
    avazu-app & 40,428,967 & 1,000,000 & 1.50E-05 & \textbf{1429.55 } & 5557.97  & f     & f     & f \\
    avazu-site & 25,832,830 & 1,000,000 & 1.50E-05 & \textbf{4224.95 } & f     & f     & f     & f \\
    kddb2010\_test & 748,401 & 1,163,024 & 7.74E-06 & \textbf{11.10 } & 46.49  & 490.66  & 255.81  & 69.57  \\
    kdda2010\_test & 510,302 & 20,216,830 & 1.87E-05 & \textbf{61.00 } & 148.01  & f     & f     & f \\
    kddb2010\_train & 19,264,097 & 1,163,024 & 7.97E-06 & \textbf{387.00 } & 1715.50  & f     & f     & f \\
    kdda2010\_train & 8,407,752 & 20,216,830 & 1.80E-06 & \textbf{2705.94 } & f     & f     & f     & f \\
    \hline
    \end{tabular}%
  \label{real_lasso}%
     \begin{tablenotes}
\footnotesize
\item [1] "f" means the algorithm fails to solve the problem within 7200s.
\end{tablenotes}
\end{threeparttable}
}
\end{table}%

Table \ref{real_lasso} presents the solving times of different algorithms on the Lasso problems using selected datasets. The dimensions of matrix $A$ and the corresponding sparsity for each instance are also provided. The results show that PDHCG demonstrates superior performance across nearly all instances. As Table \ref{real_lasso} shown, SCS (GPU), OSQP, and COPT are only able to solve the two smallest instances, $SLS$ and $kddb2010\_test$, while they hit the time limit of 7200 seconds on the larger problems. Compared to rAPDHG, PDHCG exhibits better performance on these real-world problems, achieving an average 4x improvement in solving time. For the largest instances, $avazu$-$site$ and $kdda2010\_train$, only PDHCG successfully finds the solution.


\subsection{Results on Two Benchmark Sets}
\label{twobenchmark}
In this section, we conducted tests using instances from the Maros–Mészáros benchmark \citep{Maros1999qp} and QPLIB relaxations \citep{furini2019qplib}. The Maros–Mészáros dataset, a standard benchmark for convex quadratic programming, consists of 134 problems. The QPLIB dataset includes a variety of quadratic programming problems, some with quadratic and integer constraints. We filtered and relaxed certain instances, selecting 34 problems for our tests. Due to the smaller size of the Maros–Mészáros dataset compared to QPLIB, we set the algorithm's time limit to 600 seconds for the Maros–Mészáros dataset and 3600 seconds for the QPLIB dataset. To assess the stability of our algorithm on small to medium-sized problems, we present the results for the CPU and GPU versions of our PDHCG method, as well as rAPDHG, and include the commercial solver COPT \citep{ge2022cardinal} as a benchmark. For a more comprehensive comparison of other solvers on these two benchmark sets, we refer readers to \citep{lu2023practical}.

Tables \ref{134_total} and \ref{QPLIB_10} present the test results on the Maros–Mészáros benchmark and QPLIB set, respectively, including the number of solved problems and the geometric mean of runtime and iterations. Overall, PDHCG and rAPDHG produce comparable results. For the Maros–Mészáros benchmark dataset, rAPDHG solves more problems, but PDHCG is faster and requires only half the iterations. For the QPLIB set, as the average problem size increases, the GPU implementation of PDHCG begins to show a clear advantage.


\begin{table}[b]
	\centering
	\begin{tabular}{ccccccc}
		\hline & \multicolumn{3}{c}{ \textbf{Tol 1E-03} } & \multicolumn{3}{c}{ \textbf{Tol 1E-06}} \\
		& \textbf{Count} & \textbf{Time} & \textbf{Iteration} & \textbf{Count} & \textbf{Time} &\textbf{Iteration} \\
		\hline \textbf{PDHCG (CPU)} & 124 & 0.99 & 780 & 108 & \bf{1.23}& 2005 \\
		\textbf{PDHCG (GPU)} & 121 & 1.72 & \textbf{763} & 105 & 3.01 &\textbf{1988} \\
		\textbf{rAPDHG (CPU)} & \textbf{128} & \textbf{0.97} & 1714 & 111 & 1.26& 4203 \\
		\textbf{rAPDHG (GPU)} & 127 & 1.94 & 1706 & \bf{114} & 3.20 &4181 \\
       \hline 
        \textbf{COPT} &134&0.08 &-  & 134& 0.08 &- \\
		\hline
	\end{tabular}
 \vspace{2mm}
	\caption{The number of solved instances, the runtime, and the iteration count of different algorithms for solving 134 instances in the Maros–Mészáros benchmark set. The runtime (in seconds) is the geometric average of instances that can be solved by all solvers, with 115 instances at $10^{-3}$ and 86 instances at $10^{-6}$ tolerance.}
	\label{134_total}
\end{table}

Figure \ref{fig} shows the relative KKT error plotted against the number of iterations for two versions of the PDHCG and rAPDHG algorithms on selected problems. The data illustrate that, at certain stages of convergence, both algorithms exhibit linear convergence behavior, in line with theoretical expectations.

\begin{table}[ht]
	\centering
	\begin{tabular}{ccccccc}
		\hline & \multicolumn{3}{c}{ \textbf{Tol 1E-03} } & \multicolumn{3}{c}{ \textbf{Tol 1E-06}} \\
		& \textbf{Count} & \textbf{Time} & \textbf{Iteration} & \textbf{Count} & \textbf{Time} &\textbf{Iteration} \\
		\hline \textbf{PDHCG (CPU)} & 29 & 4.53 & 984 & 19 & 12.01& 3626 \\
		\textbf{PDHCG (GPU)} & \textbf{33} & 2.97 & \textbf{980} & \textbf{24} & \textbf{4.30} &\textbf{3610} \\
		\textbf{rAPDHG (CPU)} & 25 & 4.52 & 2149 & 20 & 16.86& 7238 \\
		\textbf{rAPDHG (GPU)} & 25 & \textbf{2.51} & 2149 & 20 & 5.17 &7238 \\
        \hline \textbf{COPT} & 34& 0.38&-& 34& 0.46 &- \\
		\hline
	\end{tabular}
\vspace{2mm}
	\caption{The number of solved instances, the time, and the iterations of different algorithms for solving 34 instances of the QPLIB with tolerances of $10^{-3}$ and $10^{-6}$. We removed the problem that the number of iterations is less than 10, and 12 instances are counted under two accuracy.}
	\label{QPLIB_10}
\end{table}


\begin{figure}[t]
    \begin{minipage}[t]{0.32\linewidth}
        \centering    
            \includegraphics[width=\linewidth]{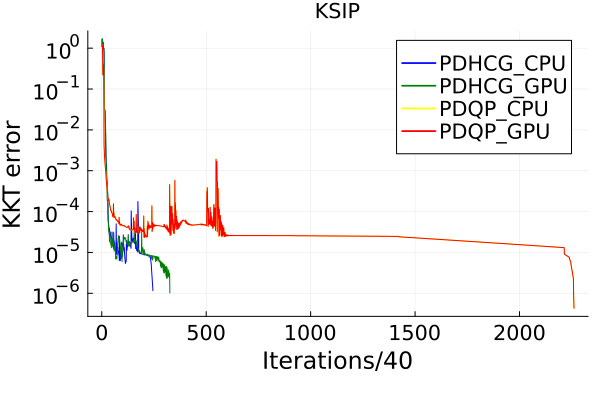}
\end{minipage}
    \hfill
    \begin{minipage}[t]{0.32\linewidth}
            \includegraphics[width=\linewidth]{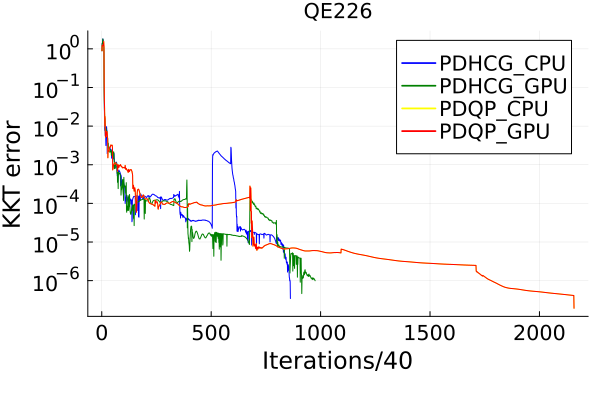}
    \end{minipage}
    \hfill
    \begin{minipage}[t]{0.32\linewidth}
        \centering
            \includegraphics[width=\linewidth]{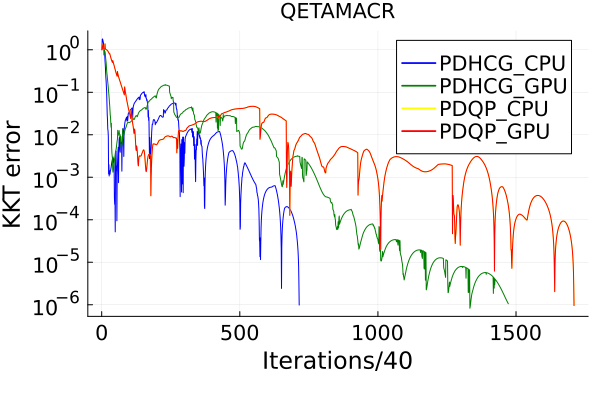}
\end{minipage}
    \hfill
    \begin{minipage}[t]{0.32\linewidth}
            \includegraphics[width=\linewidth]{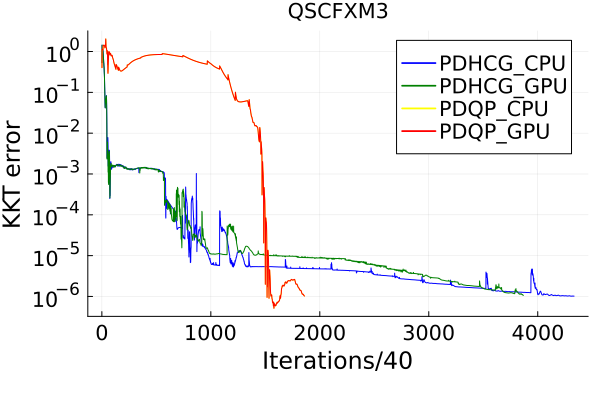}
    \end{minipage}
    \hfill
    \begin{minipage}[t]{0.32\linewidth}
        \centering
            \includegraphics[width=\linewidth]{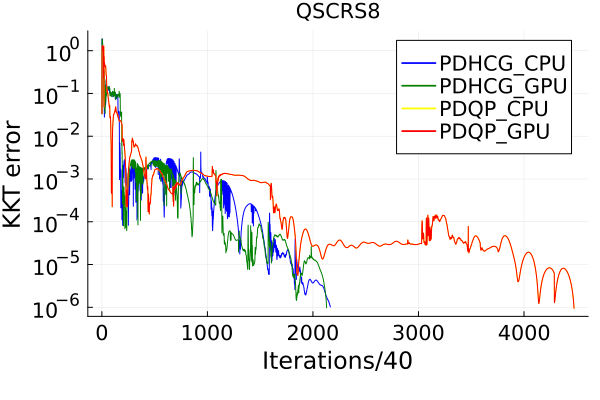}
\end{minipage}
    \hfill
    \begin{minipage}[t]{0.32\linewidth}
            \includegraphics[width=\linewidth]{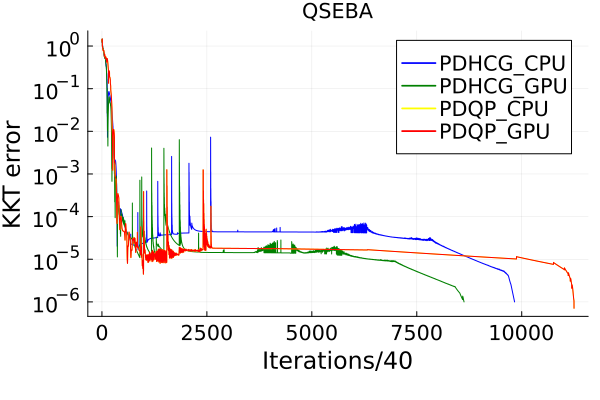}
    \end{minipage}
    \caption{Relative KKT error per iteration for PDHCG and rAPDHG algorithm on specific instances.}
    \label{fig}
\end{figure}

\section{Conclusion}\label{sec:remark}
In this paper, we introduce the restarted PDHCG method for solving convex QP problems. Building on the standard PDHG framework, PDHCG employs conjugate gradient (CG) techniques to solve primal subproblems inexactly, maintaining a linear convergence rate with an improved constant. It is straightforward to implement on GPUs, making it ideal for large-scale problems. Extensive numerical experiments show that PDHCG significantly reduces iteration counts and improves performance compared to rAPDHG, particularly for large-scale QP challenges. The GPU implementation further enhances efficiency and scalability, highlighting its potential for practical applications.


%
%
%
\section*{Acknowledgments}
{We would like to express our sincere gratitude to Weihan Xue (Shanghai Jiao Tong University) and Qiushi Han (University of Illinois Urbana-Champaign) for their invaluable contributions to this research. We are also grateful to Prof. Haihao Lu (Massachusetts Institute of Technology) and Xudong Yin (Cardinal Operations) for their support and assistance throughout the course of this work.}


\bibliographystyle{abbrv}
\bibliography{ref} 

\begin{thebibliography}{10}

\bibitem{applegate2021practical}
D.~Applegate, M.~D{\'\i}az, O.~Hinder, H.~Lu, M.~Lubin, B.~O'Donoghue, and W.~Schudy.
\newblock Practical large-scale linear programming using primal-dual hybrid gradient.
\newblock {\em Advances in Neural Information Processing Systems}, 34:20243--20257, 2021.

\bibitem{applegate2023faster}
D.~Applegate, O.~Hinder, H.~Lu, and M.~Lubin.
\newblock Faster first-order primal-dual methods for linear programming using restarts and sharpness.
\newblock {\em Mathematical Programming}, 201(1):133--184, 2023.

\bibitem{aps2019mosek}
M.~ApS.
\newblock Mosek optimization toolbox for matlab.
\newblock {\em User’s Guide and Reference Manual, Version}, 4(1), 2019.

\bibitem{boggs1995sequential}
P.~T. Boggs and J.~W. Tolle.
\newblock Sequential quadratic programming.
\newblock {\em Acta numerica}, 4:1--51, 1995.

\bibitem{budish2023flow}
E.~Budish, P.~Cramton, A.~S. Kyle, J.~Lee, and D.~Malec.
\newblock Flow trading.
\newblock Technical report, National Bureau of Economic Research, 2023.

\bibitem{chambolle2011first}
A.~Chambolle and T.~Pock.
\newblock A first-order primal-dual algorithm for convex problems with applications to imaging.
\newblock {\em Journal of mathematical imaging and vision}, 40:120--145, 2011.

\bibitem{chambolle2016ergodic}
A.~Chambolle and T.~Pock.
\newblock On the ergodic convergence rates of a first-order primal--dual algorithm.
\newblock {\em Mathematical Programming}, 159(1):253--287, 2016.

\bibitem{chang2011libsvm}
C.-C. Chang and C.-J. Lin.
\newblock Libsvm: a library for support vector machines.
\newblock {\em ACM transactions on intelligent systems and technology (TIST)}, 2(3):1--27, 2011.

\bibitem{dai2005projected}
Y.-H. Dai and R.~Fletcher.
\newblock Projected barzilai-borwein methods for large-scale box-constrained quadratic programming.
\newblock {\em Numerische Mathematik}, 100(1):21--47, 2005.

\bibitem{davis2011university}
T.~A. Davis and Y.~Hu.
\newblock The university of florida sparse matrix collection.
\newblock {\em ACM Transactions on Mathematical Software (TOMS)}, 38(1):1--25, 2011.

\bibitem{deng2022new}
Q.~Deng, Q.~Feng, W.~Gao, D.~Ge, B.~Jiang, Y.~Jiang, J.~Liu, T.~Liu, C.~Xue, Y.~Ye, et~al.
\newblock New developments of admm-based interior point methods for linear programming and conic programming.
\newblock {\em arXiv preprint arXiv:2209.01793}, 2022.

\bibitem{dongarra1991solving}
J.~J. Dongarra, I.~S. Duff, D.~C. Sorensen, H.~A. Van~der Vorst, et~al.
\newblock {\em Solving linear systems on vector and shared memory computers}, volume~10.
\newblock Society for Industrial and Applied Mathematics Philadelphia, 1991.

\bibitem{fercoq2022quadratic}
O.~Fercoq.
\newblock Quadratic error bound of the smoothed gap and the restarted averaged primal-dual hybrid gradient.
\newblock {\em arXiv preprint arXiv:2206.03041}, 2022.

\bibitem{ferreau2014qpoases}
H.~J. Ferreau, C.~Kirches, A.~Potschka, H.~G. Bock, and M.~Diehl.
\newblock qpoases: A parametric active-set algorithm for quadratic programming.
\newblock {\em Mathematical Programming Computation}, 6:327--363, 2014.

\bibitem{furini2019qplib}
F.~Furini, E.~Traversi, P.~Belotti, A.~Frangioni, A.~Gleixner, N.~Gould, L.~Liberti, A.~Lodi, R.~Misener, H.~Mittelmann, et~al.
\newblock Qplib: a library of quadratic programming instances.
\newblock {\em Mathematical Programming Computation}, 11:237--265, 2019.

\bibitem{garcia1989model}
C.~E. Garcia, D.~M. Prett, and M.~Morari.
\newblock Model predictive control: Theory and practice—a survey.
\newblock {\em Automatica}, 25(3):335--348, 1989.

\bibitem{ge2022cardinal}
D.~Ge, Q.~Huangfu, Z.~Wang, J.~Wu, and Y.~Ye.
\newblock Cardinal optimizer (copt) user guide.
\newblock {\em arXiv preprint arXiv:2208.14314}, 2022.

\bibitem{goulart2024clarabel}
P.~J. Goulart and Y.~Chen.
\newblock Clarabel: An interior-point solver for conic programs with quadratic objectives.
\newblock {\em arXiv preprint arXiv:2405.12762}, 2024.

\bibitem{greenbaum1997iterative}
A.~Greenbaum.
\newblock {\em Iterative methods for solving linear systems}.
\newblock SIAM, 1997.

\bibitem{gurobi2021gurobi}
L.~Gurobi~Optimization.
\newblock Gurobi optimizer reference manual.
\newblock 2021.

\bibitem{hearst1998support}
M.~A. Hearst, S.~T. Dumais, E.~Osuna, J.~Platt, and B.~Scholkopf.
\newblock Support vector machines.
\newblock {\em IEEE Intelligent Systems and their applications}, 13(4):18--28, 1998.

\bibitem{lin2021admm}
T.~Lin, S.~Ma, Y.~Ye, and S.~Zhang.
\newblock An admm-based interior-point method for large-scale linear programming.
\newblock {\em Optimization Methods and Software}, 36(2-3):389--424, 2021.

\bibitem{lu2023cupdlp}
H.~Lu and J.~Yang.
\newblock cupdlp. jl: A gpu implementation of restarted primal-dual hybrid gradient for linear programming in julia.
\newblock {\em arXiv preprint arXiv:2311.12180}, 2023.

\bibitem{lu2023practical}
H.~Lu and J.~Yang.
\newblock A practical and optimal first-order method for large-scale convex quadratic programming.
\newblock {\em arXiv preprint arXiv:2311.07710}, 2023.

\bibitem{lu2023cupdlpc}
H.~Lu, J.~Yang, H.~Hu, Q.~Huangfu, J.~Liu, T.~Liu, Y.~Ye, C.~Zhang, and D.~Ge.
\newblock cupdlp-c: A strengthened implementation of cupdlp for linear programming by c language.
\newblock {\em arXiv preprint arXiv:2312.14832}, 2023.

\bibitem{markowitz1952portfolio}
H.~Markowitz.
\newblock Portfolio selection*.
\newblock {\em The Journal of Finance}, 7(1):77--91, 1952.

\bibitem{Maros1999qp}
I.~Maros and C.~Mészáros.
\newblock A repository of convex quadratic programming problems.
\newblock {\em Optimization Methods and Software}, 11(1-4):671--681, 1999.

\bibitem{morari1999model}
M.~Morari and J.~H. Lee.
\newblock Model predictive control: past, present and future.
\newblock {\em Computers \& chemical engineering}, 23(4-5):667--682, 1999.

\bibitem{scs}
B.~O'Donoghue, E.~Chu, N.~Parikh, and S.~Boyd.
\newblock {SCS}: Splitting conic solver, version 3.2.4.
\newblock \url{https://github.com/cvxgrp/scs}, Nov. 2023.

\bibitem{o2016conic}
B.~O’donoghue, E.~Chu, N.~Parikh, and S.~Boyd.
\newblock Conic optimization via operator splitting and homogeneous self-dual embedding.
\newblock {\em Journal of Optimization Theory and Applications}, 169:1042--1068, 2016.

\bibitem{schubiger2020gpu}
M.~Schubiger, G.~Banjac, and J.~Lygeros.
\newblock Gpu acceleration of admm for large-scale quadratic programming.
\newblock {\em Journal of Parallel and Distributed Computing}, 144:55--67, 2020.

\bibitem{stellato2020osqp}
B.~Stellato, G.~Banjac, P.~Goulart, A.~Bemporad, and S.~Boyd.
\newblock Osqp: An operator splitting solver for quadratic programs.
\newblock {\em Mathematical Programming Computation}, 12(4):637--672, 2020.

\bibitem{tharwat2016linear}
A.~Tharwat.
\newblock Linear vs. quadratic discriminant analysis classifier: a tutorial.
\newblock {\em International Journal of Applied Pattern Recognition}, 3(2):145--180, 2016.

\bibitem{vanderbei1999loqo}
R.~J. Vanderbei.
\newblock Loqo: An interior point code for quadratic programming.
\newblock {\em Optimization methods and software}, 11(1-4):451--484, 1999.

\bibitem{wolfe1959simplex}
P.~Wolfe.
\newblock The simplex method for quadratic programming.
\newblock {\em Econometrica: Journal of the Econometric Society}, pages 382--398, 1959.

\bibitem{zou2005regularization}
H.~Zou and T.~Hastie.
\newblock Regularization and variable selection via the elastic net.
\newblock {\em Journal of the Royal Statistical Society Series B: Statistical Methodology}, 67(2):301--320, 2005.

\end{thebibliography}
\newpage

\appendix
\section{Proofs of adaptive CG} \label{subsec:proof-adaptive}
First of all, we focus on the iterations within each restart loop. For simplicity of notations, we use $x^k, y^k, z^k$ to denote $x^{n,k}, y^{n,k}, z^{n,k}$, respectively.
\begin{lemma} \label{lemma:ada-1}
  Let $z^k = (x^k, y^{k + 1})$ and $(\hat{x}, \hat{y})$ denote the saddle point of \eqref{prob:general-form}. By choosing  
  \[ 
  \theta = 1, \quad 4 \tau
  \sigma \| A \|^2 < 1, \quad 
  2 \zeta \leq  \min \left\{ \frac{1 - \sqrt {\sigma \tau} \| A \|}{2 \sqrt{\sigma
     \tau} K^2}, \frac{1 - \sqrt{\sigma \tau} \| A \|}{2 \tau K^2} \right\} \]
  and $\{ \varepsilon^k \}^{k = K}_{k = 1}$ being the sequence
  of error terms satisfying \eqref{stop-criteria}, the following holds:
  \begin{itemize}
      \item[1)] For any $0 \leq k \leq K - 1$,
  \[ \frac{\| y^k - \hat{y} \|^2}{2 \sigma} + \frac{\| x^k - \hat{x} \|^2}{2
     \tau} \leq \left( \frac{1 - \tau \sigma \| A \|^2}{2} \right)^{- 1}
     \left( \frac{\| y^0 - \hat{y} \|^2}{2 \sigma} + \frac{\| x^0 - \hat{x}
     \|^2}{2 \tau} \right) \]
  \item[2)] Let $\tilde{x} = \left( \sum^K_{k = 1} x^k \right) / K$ and $\tilde{y} =
  \left( \sum^K_{k = 1} y^k \right) / K$, then
  \[ Q (\tilde{z}, z) \leq \frac{1}{K} \left( \frac{\| y - y^0 \|^2}{2
     \sigma} + \frac{\| x - x^0 \|^2}{2 \tau} \right) \]
     \end{itemize}
\end{lemma}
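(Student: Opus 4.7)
\textbf{Proof Plan for Lemma \ref{lemma:ada-1}.}

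The plan is to adapt the standard Chambolle--Pock one-step analysis to the inexact setting, using the CG stopping sequence $\{\varepsilon^k\}$ to control the perturbations. Let $x_*^{k+1}$ denote the exact minimizer of the primal subproblem at iteration $k$, so that $\|x^{k+1} - x_*^{k+1}\| \leq \varepsilon^{k+1}$ by the CG stopping rule. First I would write the optimality inequality for the $\tau$-strongly convex primal subproblem at $x_*^{k+1}$, then translate it to an inequality in terms of $x^{k+1}$ by plugging $x^{k+1} = x_*^{k+1} + (x^{k+1} - x_*^{k+1})$; this produces the clean Chambolle--Pock inequality together with residual terms of the form $\frac{1}{\tau}\langle x^{k+1} - x_*^{k+1},\, \cdot\rangle$ and $\langle A(\cdot), y\rangle$ perturbations bounded by $\|A\|\varepsilon^{k+1}$. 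Next I would add the exact dual optimality inequality at $y^{k+1}$ (with extrapolation $2x^{k+1}-x^k$) and recombine the bilinear terms to isolate the cross term $\langle A(x^{k+1}-x^k),\, y^{k+1}-y\rangle$ that is standard in Chambolle--Pock.

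After the usual splitting of this cross term via $2ab \leq \sqrt{\tau/\sigma}a^2 + \sqrt{\sigma/\tau}b^2$, I obtain the one-step recursion
\begin{equation*}
\frac{\|x-x^{k+1}\|^2}{2\tau} + \frac{\|y-y^{k+1}\|^2}{2\sigma} + Q(z^{k+1},z) \;\leq\; \frac{\|x-x^k\|^2}{2\tau} + \frac{\|y-y^k\|^2}{2\sigma} - \Delta_k + \Delta_{k-1} + R_k,
\end{equation*}
where $\Delta_k$ absorbs the usual coefficient $\tfrac{1-\sqrt{\sigma\tau}\|A\|}{2}$ times $\|z^{k+1}-z^k\|^2/\sigma$ (or $/\tau$), and $R_k$ collects the inexactness residuals, each of which is controlled by $\varepsilon^{k+1}\|y-y^{k+1}\|/\sqrt{\sigma}$ or $\varepsilon^{k+1}\|z^{k+1}-z^k\|/\sqrt{\tau}$ up to constants involving $\|A\|$ and $\|Q\|$. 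This is the technical heart of the proof.

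For part (1), I would specialize $(x,y)=(\hat x,\hat y)$ so that $Q(z^{k+1},\hat z)\geq 0$ and telescope. The cumulative definition of $\varepsilon^k$ in \eqref{stop-criteria} is designed precisely so that $\sum_{k=0}^{K-1}\varepsilon^{k+1} \leq K\varepsilon^K$ is bounded in terms of $\sum_{k}\|z^k-z^{k-1}\|/(1+\tau\|Q\|)$, and the bound on $\zeta$ ensures that every such residual can be absorbed into $\Delta_k - \Delta_{k-1}$ after Young's inequality. The outcome is $\frac{\|y^k-\hat y\|^2}{2\sigma} + \frac{\|x^k-\hat x\|^2}{2\tau} \leq \bigl(\tfrac{1-\tau\sigma\|A\|^2}{2}\bigr)^{-1}\bigl(\tfrac{\|y^0-\hat y\|^2}{2\sigma} + \tfrac{\|x^0-\hat x\|^2}{2\tau}\bigr)$ as claimed, where the factor $(1-\tau\sigma\|A\|^2)/2$ records the loss from splitting the cross term.

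For part (2), I would keep $(x,y)$ as a generic point in $\mathcal Z$, sum the one-step recursion from $k=0$ to $K-1$, and divide by $K$. Convexity-concavity of $\mathcal{L}$ in the two arguments, together with $\beta_k = k$ (Jensen's inequality on the running average), yields $Q(\tilde z, z) \leq \tfrac{1}{K}\sum_{k=0}^{K-1} Q(z^{k+1}, z)$; the telescoped distance term on the right collapses to $\tfrac{\|y-y^0\|^2}{2\sigma} + \tfrac{\|x-x^0\|^2}{2\tau}$, while the residual sum $\sum R_k$ is again absorbed via the same $\zeta$-estimate used in part (1), now with the iterate boundedness from part (1) ensuring the absorbed quantities are finite. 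The main obstacle throughout is bookkeeping: verifying that the two ceilings in the definition of $\zeta$ (one scaled by $\sqrt{\sigma\tau}$, the other by $\tau$) exactly match the two flavors of error terms produced by the inexact primal update, so that every residual can be shifted into a $\Delta_k - \Delta_{k-1}$ telescoping pair.
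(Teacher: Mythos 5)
Your overall strategy is the same as the paper's: an inexact Chambolle--Pock one-step estimate, summation over the inner loop with telescoping of the bilinear cross terms, specialization to the saddle point for part (1), and Jensen's inequality on the running average for part (2). The factor $\bigl(\tfrac{1-\tau\sigma\|A\|^2}{2}\bigr)^{-1}$ arises exactly as you describe, from splitting the cross term and retaining only part of the final distance term.

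The gap is in how you dispose of the inexactness residuals, which is the only place this proof differs from the exact analysis. The perturbation enters the primal optimality inequality as $\langle \delta^{k+1}, x - x^{k+1}\rangle$ with $\delta^{k+1} = (I+\tau\partial G)(x^{k+1}-x^{k+1}_*)$, so each residual carries the distance from the iterate to the \emph{arbitrary} comparison point $x$, not a consecutive difference. You claim these can be ``absorbed into $\Delta_k - \Delta_{k-1}$ after Young's inequality,'' but a term $\varepsilon^{k+1}\|x - x^{k+1}\|$ at an intermediate iteration $k<K$ has no matching telescoping partner: applying Young's inequality there leaves a $\|x-x^{k+1}\|^2$ contribution that neither cancels nor appears on the left-hand side of the summed inequality (only $\|x-x^K\|^2$ survives the telescope). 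The paper's device is a summation by parts,
\begin{equation*}
\sum^K_{n=1}\langle \delta^n, x - x^n\rangle \;=\; \Bigl\langle \sum^K_{n=1}\delta^n,\; x - x^K\Bigr\rangle + \sum^{K-1}_{n=1}\Bigl\langle \sum^n_{m=1}\delta^m,\; x^{n+1}-x^n\Bigr\rangle,
\end{equation*}
which converts all but one distance-to-$x$ factor into consecutive differences; the remaining $\|x-x^K\|^2$ is absorbed into the final distance term, and the partial sums $\bigl\|\sum_{m\le n}\delta^m\bigr\|^2$ are bounded by $4\zeta^2K^3\sum_k\|z^k-z^{k-1}\|^2$ using the cumulative structure of \eqref{stop-criteria}. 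The condition $\zeta \lesssim K^{-2}$ is calibrated precisely to cancel the resulting $K^3$--$K^4$ factors against the $(1-\sqrt{\sigma\tau}\|A\|)$-weighted difference terms. Without this rearrangement (or an equivalent one) your absorption step does not go through, so you should make it explicit. The rest of your outline, including the role of $(1+\tau\|Q\|)$ in the denominator of $\varepsilon^k$ (it cancels the operator norm of $I+\tau\partial G$ in $\delta^k$), matches the paper.
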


\begin{proof}
  First of all, the optimality conditions of primal and dual updates are given by 
  \begin{align}\label{eq:opt-cond}
  \begin{split} 
      y^{k + 1} & = (I + \sigma \partial F)^{- 1} (y^k + \sigma A
     \bar{x}), \\
  x^{k + 1}_* &= (I + \tau \partial G)^{- 1} (x^k - \tau A^T  \bar{y})
  \end{split}
  \end{align} 
  where $\bar{x} = x^k + \theta(x^k - x^{k-1}) = 2x^k - x^{k-1}$ and  $\bar{y} = y^{k+1}$. Since we use the inexact primal update, let $\delta^{k+1} = (I + \tau \partial G) (x^{k+1} -x^{k+1}_*) $, we have
  \[
x^{k + 1} = x^{k+1}_* + (I + \tau \partial G)^{-1} \delta^{k+1}
     = (I + \tau \partial G)^{- 1} (x^k - \tau A^T  \bar{y} +
     \delta^{k + 1}).
  \]
  The above equations imply that
\[
      \partial F (y^{k + 1})  = \frac{y^k - y^{k + 1}}{\sigma} + A
     \bar{x}, \quad \text{and} \quad 
     \partial G (x^{k + 1})  = \frac{x^k - x^{k + 1}}{\tau} - A^T
     \bar{y} + \delta^{k + 1}.
\]
  Then, according to the convexity of both $F(\cdot)$ and $G(\cdot)$, for any $(x, y)$, we have
   \begin{align}\label{tylor-F}
      F (y) & \geq F (y^{k + 1}) + \left\langle \frac{y^k -
     y^{k + 1}}{\sigma}, y - y^{k + 1} \right\rangle + \langle A \bar{x}, y -
     y^{k + 1} \rangle\\
    G (x) & \geq G (x^{k + 1}) + \left\langle \frac{x^k - x^{k + 1}}{\tau}, x - x^{k
    + 1} \right\rangle - \langle A (x - x^{k + 1}), \bar{y} \rangle + \langle
    \delta^{k + 1}, x - x^{k + 1} \rangle \label{tylor-G}
  \end{align}
  Sum the above inequalities \eqref{tylor-F} and \eqref{tylor-G} to get
   \begin{align}\begin{split}
  \begin{split}
    \frac{\| y - y^k \|^2}{2 \sigma} + \frac{\| x - x^k \|^2}{2 \tau} &
    \geq [\langle A x^{k + 1}, y \rangle - F (y) + G (x^{k +
    1})] - [\langle A x, y^{k + 1} \rangle - F (y^{k + 1}) + G (x)] \\
    & + \frac{\| y - y^{k + 1} \|^2}{2 \sigma} + \frac{\| y^k - y^{k + 1}
    \|^2}{2 \sigma} + \frac{\| x - x^{k + 1} \|^2}{2 \tau} + \frac{\| x^k -
    x^{k + 1} \|^2}{2 \tau} \\
    & + \langle A (x^{k + 1} - \bar{x}), y^{k + 1} - y \rangle - \langle A
    (x^{k + 1} - x), y^{k + 1} - \bar{y} \rangle + \langle \delta^{k +
    1}, x - x^{k + 1} \rangle \label{ineq-1}
    \end{split}
  \end{split} \end{align}
  Substituting $\bar{x} = 2 x^k - x^{k - 1}$ and $\bar{y} = y^{k + 1}$ into the two terms in the last row of \eqref{ineq-1}, we get
\begin{align}\begin{split}\label{ineq-1-last}
  \begin{split}   
    & \langle A (x^{k + 1} - \bar{x}), y^{k + 1} - y \rangle - \langle A
    (x^{k + 1} - x), y^{k + 1} - \bar{y} \rangle \\
    = & \langle A ((x^{k + 1} - x^k) - (x^k - x^{k - 1})), y^{k + 1} - y
    \rangle \\
    = & \langle A \nobracket (x^{k + 1} - x^k \nobracket), y^{k + 1} - y
    \rangle - \langle A \nobracket (x^k - x^{k - 1} \nobracket), y^k - y
    \rangle - \langle A \nobracket (x^k - x^{k - 1} \nobracket), y^{k + 1} -
    y^k \rangle \\
    \geq & \langle A (x^{k + 1} - x^k), y^{k + 1} - y \rangle - \langle A
    (x^k - x^{k - 1}), y^k - y \rangle - \sqrt{\sigma \tau}
     \| A \|  \frac{\| x^k - x^{k - 1} \|^2}{2 \tau} + \sqrt{\sigma \tau} \| A \|
     \frac{\| y^{k + 1} - y^k \|^2}{2 \sigma}
    \end{split}
  \end{split} \end{align}
  where the last inequality holds because of the Cauchy-Schwarz inequality.
  Substituting \eqref{ineq-1-last}  into \eqref{ineq-1}, we get
   \begin{align}\begin{split}\label{ineq-2}
  \begin{split}
    \frac{\| y - y^k \|^2}{2 \sigma} + \frac{\| x - x^k \|^2}{2 \tau}
   \, \geq \, & [\langle A x^{k + 1}, y \rangle - F (y) + G (x^{k +
    1})] - [\langle A x, y^{k + 1} \rangle - F (y^{k + 1}) + G (x)]\\
    & + \frac{\| y - y^{k + 1} \|^2}{2 \sigma} + \frac{\| y^k - y^{k + 1}
    \|^2}{2 \sigma} + \frac{\| x - x^{k + 1} \|^2}{2 \tau} + \frac{\| x^k -
    x^{k + 1} \|^2}{2 \tau}\\
    & + \langle A (x^{k + 1} - x^k), y^{k + 1} - y \rangle - \langle A (x^k
    - x^{k - 1}), y^k - y \rangle\\
    & - \sqrt{\sigma \tau} \| A \|  \frac{\| x^k - x^{k - 1} \|^2}{2 \tau} -
    \sqrt{\sigma \tau} \| A \| \frac{\| y^{k + 1} - y^k \|^2}{2 \sigma} + \langle
    \delta^{k + 1}, x - x^{k + 1} \rangle 
    \end{split}
  \end{split} \end{align}
  By taking the sum of \eqref{ineq-2} from $k = 0$ to $K - 1$, for any $(x, y)$, we have
   \begin{align}\begin{split} \label{sum-ineq-2} 
  \begin{split}
    & \frac{\| y - y^0 \|^2}{2 \sigma} + \frac{\| x - x^0 \|^2}{2 \tau} +
    \langle A (x^K - x^{K - 1}), y^K - y \rangle \\
    \geq  & \sum^K_{k = 1} [\langle A x^k, y \rangle - F (y) + G
    (x^k)] - [\langle A x, y^k \rangle - F (y^k) + G (x)]  +  \frac{\| y - y^K \|^2}{2 \sigma} + \frac{\| x - x^K \|^2}{2 \tau} \\
    & + (1
    - \sqrt{\sigma \tau} \| A \|) \sum^K_{k = 1} \left( \frac{\| y^k - y^{k - 1} \|^2}{2
    \sigma}
    + \frac{\| x^k - x^{k -
    1} \|^2}{2 \tau} \right)  + \frac{\| x^K - x^{K - 1} \|^2}{2 \tau} + \sum^{K -
    1}_{n = 0} \langle \delta^{k + 1}, x - x^{k + 1} \rangle 
    \end{split}
  \end{split} \end{align}
  According to the Cauchy-Schwartz inequality, we have
  \[ \langle A (x^K - x^{K - 1}), y^K - y \rangle \leq \frac{\| x^K -
     x^{K - 1} \|^2}{4 \tau} + 2 \sigma \tau \| A \|^2 \frac{\| y - y^K
     \|^2}{2 \sigma}. \]
  Substitute it into \eqref{sum-ineq-2} to get
   \begin{align}\begin{split}
    \frac{\| y - y^0 \|^2}{2 \sigma} + \frac{\| x - x^0 \|^2}{2 \tau} &
    \geq \sum^K_{k = 1} [\langle A x^k, y \rangle - F (y) + G
    (x^k)] - [\langle A x, y^k \rangle - F (y^k) + G (x)]\\
    & + (1 - 2\sigma \tau \| A \|^2) \frac{\| y - y^K \|^2}{2 \sigma} +
    \frac{\| x - x^K \|^2}{2 \tau} + (1 - \sqrt{\sigma \tau} \| A \|) \sum^K_{k = 1}
    \frac{\| y^k - y^{k + 1} \|^2}{2 \sigma} \\
    & + (1 - \sqrt{\sigma \tau} \| A \|) \sum^{K - 1}_{k = 1} \frac{\| x^k - x^{k- 1} \|^2}{2 \tau} + \sum^K_{k = 1} \langle \delta^k, x - x^k \rangle + \frac{\| x^K - x^{K - 1} \|^2}{4 \tau}\label{sum-ineq-3}
  \end{split} \end{align}
  Since $\delta^k = (I+\tau \partial G) (x^k - x^k_*)$ and $\|x^k - x^k_*\|\leq \varepsilon^k$ satisfies \eqref{stop-criteria}, we have $\delta^1 \leq \zeta \| y^1 - y^0 \|$ and $ \delta^k \leq \delta^{k-1} +  \zeta \| z^{k-1} - z^{k-2} \|$, so for any $1 \leq m \leq K$, we have
   \begin{align}\begin{split} 
      \left\| \sum_{k = 1}^m \delta^k \right\|^2 \leq  K \sum_{k =
     1}^K \| \nobracket \delta^k | |^2 \leq \zeta^2 K^2 \left\| \sum^{K - 1}_{k = 1} \| z^k - z^{k - 1} \| +\| y^1 - y^0 \| \right\|^2 
     \leq  4 \zeta^2 K^3 \sum^{K - 1}_{k = 1} \| z^k - z^{k - 1} \|^2\label{eps-ineq}
  \end{split} \end{align}
where the last inequality use the fact that $\| y^1 - y^0 \| \leq \| z^1 - z^0 \|$. Then, for the last error term in \eqref{sum-ineq-3},  we have
   \begin{align}\begin{split}
    & \sum^K_{n = 1} \langle \delta^n, x - x^n \rangle  \\
    = & \sum^K_{n = 1}
    \langle \delta^n, (x - x^K) + (x^K - x^{K - 1}) + \cdots + (x^{n + 1}
    - x^n) \rangle  \\
    = & \left\langle \sum^K_{n = 1} \delta^n, x - x^K \right\rangle +
    \sum^{K - 1}_{n = 1} \left\langle \sum^n_{m = 1} \delta^m, x^{n + 1}
    - x^n \right\rangle \\
    \geq & - \frac{\| x - x^K \|^2}{4 \tau} - \tau \left\| \sum^K_{n =
    1} \delta^n \right\|^2 - (1 - \sqrt{\sigma \tau} \| A \|) \sum^{K - 1}_{n =
    1} \frac{\| x^{n + 1} - x^n \|^2}{4 \tau}  - \frac{\tau}{1 - \sqrt{\sigma \tau} \| A \|} \sum^{K - 1}_{n = 1} \left\|
    \sum^n_{m = 1} \delta^m \right\|^2 \\
    \geq &  - \frac{\| x - x^K \|^2}{4 \tau} - (1 - \sqrt{\sigma \tau} \| A \|) \sum^{K - 1}_{n = 1} \frac{\| x^{n + 1} - x^n \|^2}{4 \tau}  - \frac{\tau}{1 - \sqrt{\sigma \tau} \| A \|} \sum^K_{n = 1} \left\| \sum^n_{m = 1} \delta^m \right\|^2 \label{err-ineq-1}
    \end{split} \end{align}
where the first inequality holds because of Cauchy-Schwartz inequality. Now apply the inequality \eqref{eps-ineq} to the last term in the right-hand-side of \eqref{err-ineq-1}
     \begin{align}\begin{split}
    \sum^K_{n = 1} \langle \delta^n, x - x^n \rangle &  \geq - \frac{\| x - x^K \|^2}{4 \tau} - (1 - \sqrt{\sigma \tau} \| A \|) \sum^{K - 1}_{n = 1} \frac{\| x^{n + 1} - x^n \|^2}{4 \tau}  - \frac{4 \tau\zeta^2 K^3}{1 - \sqrt{\sigma \tau} \| A \|} \sum^K_{n = 1} 
    \sum^{K - 1}_{n = 1} \| z^n - z^{n - 1} \|^2 \\
    & \geq - \frac{\| x - x^K \|^2}{4 \tau} - (1 - \sqrt{\sigma \tau} \| A \|) \sum^{K - 1}_{n = 1} \frac{\| x^{n + 1} - x^n \|^2}{4 \tau} -
    \frac{4 \tau \zeta^2 K^4}{1 -\sqrt{\sigma \tau} \| A \|} \sum^{K - 1}_{k = 1} \| z^n -z^{n - 1} \|^2 \\
    & \geq - \frac{\| x - x^K \|^2}{4 \tau} - (1 - \sqrt{\sigma \tau} \| A \|) \sum^K_{n = 2} \frac{\| x^n - x^{n - 1} \|^2}{4 \tau} \\
    &  - (1 - \sqrt{\sigma \tau} \| A \|) \sum^{K - 1}_{n = 1} \frac{\| x^n - x^{n
    - 1} \|^2}{4 \tau} - (1 - \sqrt{\sigma \tau} \| A \|) \sum^K_{n = 2} \frac{\| y^n
    - y^{n - 1} \|^2}{4 \sigma}  \\
    & \geq - \frac{\| x - x^K \|^2}{4 \tau} - (1 - \sqrt{\sigma \tau} \| A \|) \sum^K_{n = 1} \frac{\| x^n - x^{n - 1} \|^2}{2 \tau} - (1 - \sqrt{\sigma \tau} \| A \|) \sum^K_{n = 1} \frac{\| y^n - y^{n - 1} \|^2}{4 \sigma}. \label{eps-ineq-1}
  \end{split} \end{align}
  where the third inequality holds because $2 \zeta \leq  \min \left\{ \frac{1 - \sqrt{\sigma \tau} \| A \|}{2 \sqrt{\sigma
     \tau} K^2}, \frac{1 - \sqrt{\sigma \tau} \| A \|}{2 \tau K^2} \right\}.$
  Plugging \eqref{eps-ineq-1} into \eqref{sum-ineq-3}, we have
   \begin{align}\begin{split}
    \frac{\| y - y^0 \|^2}{2 \sigma} + \frac{\| x - x^0 \|^2}{2 \tau} 
    \geq & \sum^K_{k = 1} [\langle A x^k, y \rangle - F (y) +
    G (x^k)] - [\langle A x, y^k \rangle - F (y^k) + G (x)] \\
    & + (1 - 2 \sigma \tau \| A \|^2) \frac{\| y - y^K \|^2}{2 \sigma} +
    \frac{\| x - x^K \|^2}{2 \tau} + (1 - \sqrt{\sigma \tau} \| A \|) \sum^K_{k = 1}
    \frac{\| y^k - y^{k - 1} \|^2}{2 \sigma} \\
    & + (1 - \sqrt{\sigma \tau} \| A \|) \sum^K_{k = 1} \frac{\| x^k - x^{k - 1}
    \|^2}{2 \tau} - \frac{\| x - x^K \|^2}{4 \tau} \\
    & - (1 - \sqrt{\sigma \tau} \| A \|) \sum^K_{k = 1} \frac{\| x^k - x^{k - 1}
    \|^2}{2 \tau} - (1 - \sqrt{\sigma \tau} \| A \|) \sum^K_{k = 1} \frac{\| y^k -
    y^{k - 1} \|^2}{4 \sigma}. \label{sum-ineq-4}
  \end{split} \end{align}
  Letting $(x, y) = (\hat{x}, \hat{y})$ be the saddle point of \eqref{prob:general-form}, we have
  $$Q (z^k, \hat{z}) = [\langle A x^k,
  \hat{y} \rangle - F (\hat{y}) + G (x^k)] - [\langle A \hat{x}, y^k
  \rangle - F (y^k) + G (\hat{x})] \geq 0.
  $$
  Choose $2 \sigma \tau \| A \|^2\leq 1$, then \eqref{sum-ineq-4} becomes
  \[ \frac{\| \hat{y} - y^0 \|^2}{2 \sigma} + \frac{\| \hat{x} - x^0 \|^2}{2
     \tau} \geq (1 - 2\sigma \tau \| A \|^2) \frac{\| \hat{y} - y^K
     \|^2}{2 \sigma} + \frac{\| \hat{x} - x^K \|^2}{4 \tau}, \]
  which implies that for any $k$
  \[ \frac{\| y^k - \hat{y} \|^2}{2 \sigma} + \frac{\| x^k - \hat{x} \|^2}{2
     \tau} \leq \left( \frac{1 - 2\tau \sigma \| A \|^2}{2} \right)^{- 1}
     \left( \frac{\| y^0 - \hat{y} \|^2}{2 \sigma} + \frac{\| x^0 - \hat{x}
     \|^2}{2 \tau} \right). \]
  So we complete the proof of part $1)$. Finally, by choosing $\tilde{x} = \left(
  \sum^K_{k = 1} x^k \right) / K$ and $\tilde{y} = \left( \sum^K_{k = 1} y^k
  \right) / K$ in \eqref{sum-ineq-4} and using the convexity of $G$ and $F$, we have
  \[ Q (\tilde{z}, z) = [\langle A \tilde{x}, y \rangle - F (y) + G
     (\tilde{x})] - [\langle A x, \tilde{y} \rangle - F (\tilde{y}) +
     G (x)] \leq \frac{1}{K} \left( \frac{\| y - y^0 \|^2}{2 \sigma} +
     \frac{\| x - x^0 \|^2}{2 \tau} \right), \]
  which completes the proof of part $2)$.
\end{proof}

\begin{lemma} \label{lemma:ada-2}
  Suppose the restart frequency $K$ is fixed. Let $\sigma = \tau =
  \frac{1}{2 \| A \|}$, then we have
  \begin{itemize}
      \item[1)]for any $z \in \mathcal{Z}$,
  \[ Q (\tilde{z}, z) \leq \frac{\| A \|}{K} \| z^0 - z \|^2; \]
  \item[2)]for any $0 \leq k \leq K - 1$, 
  \[ \| z^k - \hat{z} \|^2 \leq 4 \| z^0 - \hat{z} \|^2. \]
  \end{itemize}
\end{lemma}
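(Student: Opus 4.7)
The plan is to specialize Lemma~\ref{lemma:ada-1} to the choice $\sigma = \tau = \frac{1}{2\|A\|}$; both claims then follow by direct substitution. First I would check that the hypotheses of Lemma~\ref{lemma:ada-1} are in force: $\theta = 1$ holds by assumption, $\sigma\tau\|A\|^2 = \tfrac{1}{4}$ so the step-size conditions used inside the proof of Lemma~\ref{lemma:ada-1} are satisfied, and $\zeta$ is assumed to meet the bound in \eqref{stop-criteria} already.

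For part 1), I would apply part 2) of Lemma~\ref{lemma:ada-1}. The key observation is $\frac{1}{2\sigma} = \frac{1}{2\tau} = \|A\|$, so that
\[
Q(\tilde{z}, z) \leq \frac{1}{K}\left(\frac{\|y - y^0\|^2}{2\sigma} + \frac{\|x - x^0\|^2}{2\tau}\right) = \frac{\|A\|}{K}\bigl(\|y - y^0\|^2 + \|x - x^0\|^2\bigr) = \frac{\|A\|}{K}\|z - z^0\|^2.
\]

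For part 2), I would apply part 1) of Lemma~\ref{lemma:ada-1} at $(x,y) = (\hat{x}, \hat{y})$. With $2\sigma\tau\|A\|^2 = \tfrac{1}{2}$, the contraction factor $(1 - 2\sigma\tau\|A\|^2)/2$ established in the proof of Lemma~\ref{lemma:ada-1} equals $\tfrac{1}{4}$, whose reciprocal is exactly $4$. Multiplying the resulting inequality through by the common value $2\sigma = 2\tau = 1/\|A\|$ cancels the $\sigma,\tau$ weights on both sides and yields
\[
\|y^k - \hat{y}\|^2 + \|x^k - \hat{x}\|^2 \leq 4\bigl(\|y^0 - \hat{y}\|^2 + \|x^0 - \hat{x}\|^2\bigr),
\]
which is precisely $\|z^k - \hat{z}\|^2 \leq 4\|z^0 - \hat{z}\|^2$.

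Since Lemma~\ref{lemma:ada-2} is essentially a corollary of Lemma~\ref{lemma:ada-1} with the step sizes tuned so that the abstract constants evaluate to the clean values $\|A\|/K$ and $4$, I do not foresee any real obstacle; the only mild subtlety is being consistent about which form of the decay factor (the $(1 - 2\sigma\tau\|A\|^2)/2$ that actually appears at the end of the proof of Lemma~\ref{lemma:ada-1}) is used to derive the constant $4$.
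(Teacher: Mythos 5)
Your proposal is correct and follows essentially the same route as the paper: both parts are obtained by specializing Lemma~\ref{lemma:ada-1} to $\sigma=\tau=\frac{1}{2\|A\|}$, using $\frac{1}{2\sigma}=\frac{1}{2\tau}=\|A\|$ for part 1) and the factor $\left(\frac{1-2\sigma\tau\|A\|^2}{2}\right)^{-1}=4$ for part 2). Your remark about which form of the decay factor yields the constant $4$ matches exactly what the paper does.
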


\begin{proof}
  Firstly,  since $\sigma, \tau$ is chosen such that $4 \sigma \tau \| A \|^2\leq 1$, part 2) in \Cref{lemma:ada-1} implies that
  \[ Q (\tilde{z}, z) \leq \frac{1}{K} \left( \frac{\| y - y^0 \|^2}{2
     \sigma} + \frac{\| x - x^0 \|^2}{2 \tau} \right) \leq \frac{\| A
     \|}{K} \| z^0 - z \|^2. \]
  Besides, it is easy to see that 
  \[ \frac{\| y^k - \hat{y} \|^2}{2 \sigma} + \frac{\| x^k - \hat{x} \|^2}{2
     \tau} = \| A \| \| z^k - \hat{z} \|^2 \]
     and 
  \[ \left( \frac{1 - 2\tau \sigma \| A \|^2}{2} \right)^{- 1} \left( \frac{\|
     y^0 - \hat{y} \|^2}{2 \sigma} + \frac{\| x^0 - \hat{x} \|^2}{2 \tau}
     \right) = 4 \| A \| \| z^0 - \hat{z} \|^2. \]
  So part 1) in \Cref{lemma:ada-1} implies that
  \[ \| z^k - \hat{z} \|^2 \leq 4 \| z^0 - \hat{z} \|^2. \]
\end{proof}

\begin{lemma} \label{lemma:ada-3}
  Suppose the restart frequency $K$ satisfies $K \geq
  \frac{4 \| A \|}{\xi}$. Then, it holds for any $\dot{z} \in \mathcal{Z}$ that
  \[ G_{\xi} (\tilde{z}, \dot{z}) \leq \frac{2 \| A \|}{K} \| z^0 -
     \dot{z} \|^2. \]
\end{lemma}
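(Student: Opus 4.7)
}
The plan is to unfold the definition of the smoothed duality gap, plug in the bound on $Q(\tilde z, z)$ from part 1) of Lemma \ref{lemma:ada-2}, and then absorb the quadratic penalty $\xi\|\hat z - \dot z\|^2$ using a triangle-inequality-style split of $\|z^0 - \hat z\|^2$.

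\paragraph{Key steps.}
First I would fix an arbitrary $\dot z \in \mathcal{Z}$ and an arbitrary candidate $\hat z \in \mathcal{Z}$ in the definition
\[ G_\xi(\tilde z, \dot z) = \max_{\hat z \in \mathcal{Z}}\bigl\{ Q(\tilde z, \hat z) - \xi \|\hat z - \dot z\|^2 \bigr\}. \]
By part 1) of Lemma \ref{lemma:ada-2} applied to $z = \hat z$, one has $Q(\tilde z, \hat z) \leq \tfrac{\|A\|}{K}\|z^0 - \hat z\|^2$. Next I would apply the elementary inequality $\|z^0 - \hat z\|^2 \leq 2\|z^0 - \dot z\|^2 + 2\|\hat z - \dot z\|^2$ to obtain
\[ Q(\tilde z, \hat z) - \xi\|\hat z - \dot z\|^2 \leq \frac{2\|A\|}{K}\|z^0 - \dot z\|^2 + \left(\frac{2\|A\|}{K} - \xi\right)\|\hat z - \dot z\|^2. \]

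Finally, the hypothesis $K \geq 4\|A\|/\xi$ gives $\tfrac{2\|A\|}{K} \leq \tfrac{\xi}{2} \leq \xi$, so the second term on the right-hand side is non-positive. Discarding it and then taking the maximum over $\hat z \in \mathcal{Z}$ yields the desired bound $G_\xi(\tilde z, \dot z) \leq \tfrac{2\|A\|}{K}\|z^0 - \dot z\|^2$.

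\paragraph{Expected difficulty.}
There is essentially no obstacle here: the lemma is a direct consequence of Lemma \ref{lemma:ada-2} combined with a single application of $(a+b)^2 \leq 2a^2 + 2b^2$, with the restart-frequency condition chosen precisely to cancel the residual quadratic term. The only subtle point worth checking is that the bound from Lemma \ref{lemma:ada-2} is valid for every $\hat z \in \mathcal{Z}$ (not only for saddle points), which is exactly what part 1) of that lemma delivers.
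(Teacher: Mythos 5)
Your proof is correct, and it follows the same skeleton as the paper's: unfold the definition of $G_{\xi}$, invoke part 1) of \Cref{lemma:ada-2} (which, as you rightly check, holds for every $z \in \mathcal{Z}$, not just saddle points), and use $K \geq 4\|A\|/\xi$ to kill the residual term in $\|\hat{z}-\dot{z}\|^2$. The only divergence is the final step. The paper keeps the candidate $\hat{z}$ coupled and maximizes the concave quadratic $\frac{\|A\|}{K}\|z^0-z\|^2 - \frac{\xi}{2}\|z-\dot{z}\|^2$ exactly over $z$, reading off the optimal value $\frac{\xi\frac{\|A\|}{K}}{\xi - \frac{2\|A\|}{K}}\|z^0-\dot{z}\|^2$ and then bounding the prefactor by $\frac{2\|A\|}{K}$. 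You instead decouple via $\|z^0-\hat{z}\|^2 \leq 2\|z^0-\dot{z}\|^2 + 2\|\hat{z}-\dot{z}\|^2$ and observe that the coefficient $\frac{2\|A\|}{K}-\xi$ of the remaining quadratic is nonpositive. Your route is more elementary (no maximizer computation, no concavity check) and lands on exactly the same constant $\frac{2\|A\|}{K}$; the paper's exact-maximization route gives a nominally tighter intermediate bound before it is relaxed to the same constant, and also sidesteps a small imprecision in their write-up (they silently replace the penalty $\xi$ by $\xi/2$ before maximizing). Either argument is perfectly adequate here.
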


\begin{proof}
  By \Cref{lemma:ada-2}, for any $z \in \mathcal{Z}$, we have
  \[ G_{\xi} (\tilde{z}, \dot{z}) = Q (\tilde{z}, z) - \xi \| z - \dot{z} \|^2 \leq \frac{\| A \|}{K} \|
     z^0 - z \|^2 - \frac{\xi}{2} \| z - \dot{z} \|^2 \]
  With the choice $K \geq \frac{4 \| A \|}{\xi}$, the right hand side
  attains its maximum at $z = \frac{\frac{\xi}{2} \dot{z} - \frac{\| A \|}{K}
  z^0}{\frac{\xi}{2} - \frac{\| A \|}{K}}$. So we have
  \[ G_{\xi} (\tilde{z}, \dot{z}) \leq \frac{\xi \frac{\| A
     \|}{K}}{\xi - \frac{2 \| A \|}{K}} \| z^0 - \dot{z} \|^2 \leq \frac{2 \| A \|}{K} \| z^0 -
     \dot{z} \|^2 \]
  where the last inequality holds because $\xi - \frac{2 \| A \|}{K} \geq \frac{\xi}{2}$. So we complete the proof.
\end{proof}

Now we are ready to prove \Cref{thm:ada}.

\begin{proof}[Proof of \Cref{thm:ada}]
  We complete the proof via mathematical induction. The result holds for $n = 0$. Suppose it holds for all $n
  \leq D$. Define $z^{\ast}_n = \tmop{argmin}_{z \in \mathcal{Z}^{\ast}} \| z^{n, 0} -
  z \|^2$, then we have
   \[\begin{aligned}
    \| z^{D + 1, 0} - z^{0, 0} \| \leq & \sum_{n = 0}^D \| z^{n + 1, 0} -
    z^{n, 0} \| \leq \sum_{n = 0}^D \| z^{n + 1, 0} - z^{\ast}_n \| + \|
    z^{\ast}_n - z^{n, 0} \| \leq \sum_{n = 0}^D 3 \| z^{\ast}_n - z^{n,
    0} \|  \\
    = & \sum_{n = 0}^D 3 \tmop{dist} (z^{n, 0} , \mathcal{Z}^{\ast})
    \leq 3 \sum_{n = 0}^D e^{- n} \tmop{dist} (z^{0, 0} ,
    \mathcal{Z}^{\ast}) \leq \frac{3}{1 - 1 / e} \tmop{dist} (z^{0, 0},
    \mathcal{Z}^{\ast})
\end{aligned}
  \]
  where the first and second inequality hold because of the triangle inequality, the third inequality use part 2) in \Cref{lemma:ada-2}, and the fourth inequality holds because the mathematical induction assume \Cref{thm:ada} holds for all $n \leq D$. This implies that $z^{D + 1, 0}$ is in set $B_R (z^{0, 0})$ and thus
  $\alpha_{\xi}$ is a valid constant of quadratic growth for iterates $z^{D +
  1, 0}$. Thus, we have
   \[\begin{aligned}
    \tmop{dist}^2 (z^{D + 1, 0}, \mathcal{Z}^{\ast}) & \leq
    \frac{1}{\alpha_{\xi}} G_{\xi} (z^{D + 1, 0}, z^{\ast}_D) =
    \frac{1}{\alpha_{\xi}} G_{\xi} (\tilde{z}^{D, K}, z^{\ast}_D)  \\
    & \leq \frac{2 \| A \|}{\alpha_{\xi} K} \| z^{D, 0} - z^{\ast}_D \|^2
    = \frac{2 \| A \|}{\alpha_{\xi} K} \tmop{dist}^2 (z^{D, 0},
    \mathcal{Z}^{\ast})  \\
    & \leq e^{- 2} \tmop{dist}^2 (z^{D, 0}, \mathcal{Z}^{\ast})
    \leq e^{- 2 (D + 1)} \tmop{dist}^2 (z^{0, 0}, \mathcal{Z}^{\ast})
\end{aligned}
  \]
where the first inequality holds because of the quadratic growth property, the second inequality holds because of \Cref{lemma:ada-3}, and the third inequality holds because of the choice of $K$. Finally, by induction we complete the proof of the boundedness and linear convergence of $\{ z^{n,
  0} \}$.
\end{proof}

\begin{proof}[Proof of Proposition \ref{cor:ada}]
Suppose the iteration number need to satisfy the stop criteria in the $k$-th inner loop is $\rho_k$, according to the convergence behaviour of CG, we have
   \begin{align}\begin{split}
    \| x^k - x_{\ast}^k \| \leq & 2r^{\rho_k} \| x^{k - 1} - x_{\ast}^k \|
     \\
    \leq & 2 r^{\rho_k} \left(\| x^{k - 1} - x_{\ast}^{k - 1} \| + \| x_{\ast}^k
    - x_{\ast}^{k - 1} \|\right)  \\
    \leq & 2 r^{\rho_k} \left( \varepsilon^{k - 1} + \| (I + \tau \partial G)^{- 1} \| \cdot 
    \| (x^{k - 1} - x^{k - 2}) - \tau A^T (y^k - y^{k - 1}) \| \right)
     \\
    \leq & 2r^{\rho_k} \left( \varepsilon^{k - 1} +
    (1 + \tau \|A \|)  \cdot  \| z^{k - 1} - z^{k - 2} \|\right)
  \end{split} \end{align}
where the second inequality holds because of the triangle inequality, the third inequality holds because of the optimility condition \eqref{eq:opt-cond}, and the last one holds because $ \| (I + \tau \partial G)^{- 1} \|\leq 1$. The stop criteria of inexact CG is $ \| x^k - x_{\ast}^k \| \leq \varepsilon^k$, so we need to guarantee that 
  \[ 2r^{\rho_k} \left( \varepsilon^{k - 1} + \| (I + \tau \partial G)^{- 1} \|\cdot 
    (1 + \tau \|A \|)  \cdot  \| z^{k - 1} - z^{k - 2} \|\right) \leq \varepsilon^k. \]
Since $\varepsilon^k  = \varepsilon^{k-1} + \frac{\zeta \| z^{k-1} - z^{k-2} \|}{1 + \tau \| Q\|}$, we have
\begin{align*}
    r^{\rho_k} \leq \frac{ \varepsilon^{k-1} + \frac{\zeta \| z^{k-1} - z^{k-2} \|}{1 + \tau \| Q\|}}{2 \left( \varepsilon^{k - 1} +  
    (1 + \tau \|A \|)  \cdot  \| z^{k - 1} - z^{k - 2} \|\right)} \leq  \frac{ \zeta }{2  
    (1 + \tau \|A \|)(1 + \tau \| Q\|)  }
\end{align*}
  This holds for any iteration $k$, so the iteration number needed
  is bounded by the constant:
  \[ \log_r \frac{\zeta}{ 2  
    (1 + \tau \|A \|)(1 + \tau \| Q\|)  } \]
\end{proof}

\section{Proofs of fixed CG} \label{subsec:proof-fixed}
First of all, we focus on the iterations within each restart loop. For simplicity of notations, we use $x^k, y^k, z^k$ to denote $x^{n,k}, y^{n,k}, z^{n,k}$, respectively.
\begin{lemma} \label{lemma:fix-1}
  Let $z^k = (x^k, y^k)$ be the iterates generated by restarted PDHCG. Suppose
  that the iteration number of primal CG satisfies $N \geq \log_{\gamma_K} \frac{1}{K}$. For $0
  \leq t \leq K$, we choose $\sigma_k, \tau_k, \beta_k$ such that the following identity holds
  \[ \frac{\sigma_k}{\sigma_{k + 1}} =
     \frac{\tau_k }{\tau_{k + 1}} =\theta_{k+1} = \frac{k+1}{k+2}, \, \beta_k = k. \]
     Denote $\eta_k = k+1, 0\leq k\leq K$. Then,
\begin{itemize}
    \item[1)] it holds for any $0 \leq k \leq K$ that
   \[
    \frac{1}{2 \tau_k}  \| x - x^{k + 1} \|^2 - \frac{1}{4 \sigma_k} \| y -
    y^{k + 1} \|^2 \leq \frac{1}{2
    \tau_k} \| x - x^0 \|^2 + \frac{1}{2 \sigma_k} \| y - y^0 \|^2; \]
    \item[2)] it holds for any $0 \leq k \leq K$ and $\tilde{z }^{k + 1} = (\tilde{x }^{k + 1}, \tilde{y }^{k + 1})$ that
    \[
    \beta_k Q (\tilde{z }^{k + 1}, z) \leq  \frac{1}{2 \tau_k} \| x - x^0 \|^2 + \frac{1}{2 \sigma_k} \| y - y^0
    \|^2. \]
\end{itemize}
\end{lemma}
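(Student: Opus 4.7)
The plan is to mirror the analysis of \Cref{lemma:ada-1}, adapted to the time-varying step sizes $\sigma_k, \tau_k$, extrapolation $\theta_k$, and averaging weights $\beta_k$ that drive the accelerated version. I would begin by writing the optimality conditions of the two subproblems. The dual step is exact, giving $\partial F(y^{k+1}) \ni (y^k - y^{k+1})/\sigma_k + A \bar{x}$ with $\bar{x} = x^k + \theta_k(x^k - x^{k-1})$. The primal step is solved only approximately: denoting by $x^{k+1}_*$ the exact primal minimiser and setting $\delta^{k+1} = (I + \tau_k \partial G)(x^{k+1} - x^{k+1}_*)$, one has $\partial G(x^{k+1}) \ni (x^k - x^{k+1})/\tau_k - A^T \bar{y} + \delta^{k+1}/\tau_k$. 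Convexity of $F$ and $G$ then gives, for any test point $(x,y) \in \mathcal{Z}$, a per-step descent inequality of the same shape as (\ref{ineq-1}) but with $1/\sigma_k, 1/\tau_k$ in place of $1/\sigma, 1/\tau$ and an additional error term $\langle \delta^{k+1}, x - x^{k+1}\rangle$ on the right.

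Next I would multiply through by $\beta_k$ and sum for $k = 0, \ldots, K-1$. The scaling identity $\sigma_k/\sigma_{k+1} = \tau_k/\tau_{k+1} = \theta_{k+1} = (k+1)/(k+2)$ together with $\beta_k = k$ is tuned so that, after handling the momentum cross term $\langle A(x^{k+1} - \bar{x}), y^{k+1} - y\rangle$ by a weighted Cauchy--Schwarz argument as in (\ref{ineq-1-last}), the weighted telescoping collapses the intermediate terms and leaves: on the right only the initial penalties $\|x-x^0\|^2/(2\tau_k) + \|y-y^0\|^2/(2\sigma_k)$; on the left the terminal Lyapunov quantity appearing in part (1); and, as a by-product, a sum of non-negative quadratic penalties in $\|x^{k+1} - x^k\|$ and $\|y^{k+1} - y^k\|$ that will serve as a reservoir of slack.

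The main obstacle is controlling the accumulated error $\sum_k \beta_k \langle \delta^{k+1}, x - x^{k+1}\rangle$. Here the new ingredient beyond the exact analysis is the linear convergence of CG: warm-started from $x^k$, after $N$ iterations one has $\|x^{k+1} - x^{k+1}_*\| \leq \gamma_K^N \|x^k - x^{k+1}_*\|$, so $\|\delta^{k+1}\| \leq (1 + \tau_k \|Q\|)\gamma_K^N \|x^k - x^{k+1}_*\|$. Expanding $\|x^k - x^{k+1}_*\|$ via the exact optimality condition bounds it by a constant multiple of $\|x^k - x^{k-1}\| + \tau_k \|A\|\|y^k - y^{k-1}\|$, so the aggregated error behaves like $\gamma_K^N$ times the slack reservoir. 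The hypothesis $N \geq \log_{\gamma_K}(1/K)$ forces $\gamma_K^N \leq 1/K$, which together with the prescribed $\tau_k$ lets the error be absorbed both into the reservoir and into the $-\|y-y^{k+1}\|^2/(4\sigma_k)$ slack on the left --- this is precisely why part (1) features a negative rather than positive dual term on its left-hand side.

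Finally, for part (1) I would specialise $(x,y) = (\hat{x}, \hat{y})$ to a saddle point and drop the non-negative duality-gap contribution $Q(z^{k+1}, \hat{z}) \geq 0$. For part (2) I would apply Jensen's inequality to $z \mapsto Q(\cdot, z)$ through the recursive average $\tilde{z}^{k+1} = (1 - \beta_k^{-1})\tilde{z}^k + \beta_k^{-1} z^{k+1}$; the choice $\beta_k = k$ is exactly what allows $\sum_k \beta_k Q(z^{k+1}, z)$ to collapse into $\beta_{K-1} Q(\tilde{z}^K, z)$, yielding the stated gap bound.
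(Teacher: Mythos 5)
Your telescoping skeleton (convexity of $F$ and $G$, the weighted sum driven by $\sigma_k/\sigma_{k+1}=\tau_k/\tau_{k+1}=\theta_{k+1}$, Cauchy--Schwarz on the momentum cross term, and the collapse of $\beta_k Q(\tilde z^{k+1},z)-(\beta_k-1)Q(\tilde z^k,z)$ for part (2)) matches the paper. The genuine divergence, and the gap, is in how the inexactness is handled. You transplant the perturbed-subgradient device from \Cref{lemma:ada-1}, writing $\delta^{k+1}=(I+\tau_k\partial G)(x^{k+1}-x^{k+1}_*)$ and absorbing $\sum_k\langle\delta^{k+1},x-x^{k+1}\rangle$ into the quadratic reservoir. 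The problem is that $\|\delta^{k+1}\|\leq(1+\tau_k\|Q\|)\cdot 2\gamma_K^{N}\|x^k-x^{k+1}_*\|$, and after the Cauchy--Schwarz needed to detach the test point $x$ you pay $\|\delta^{k+1}\|^2$, i.e.\ the factor $(1+\tau_k\|Q\|)^2$. In the adaptive analysis this amplification is neutralized by design: the stopping rule \eqref{stop-criteria} divides the target accuracy by $(1+\tau\|Q\|)$ precisely so that $\|\delta^k\|$ is controlled by $\zeta\|z^{k-1}-z^{k-2}\|$ with no residual dependence on $\|Q\|$. A fixed iteration budget $N$ has no such built-in cancellation, so making the absorbed error comparable to the reservoir $\frac{1}{2\tau_k}\|x^{k+1}-x^k\|^2$ requires roughly $\gamma_K^{N}\lesssim\big(K(1+\tau_k\|Q\|)\big)^{-1}$, which is strictly stronger than the stated hypothesis $N\geq\log_{\gamma_K}\frac{1}{K}$. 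As written, your absorption step does not close under the lemma's parameters.

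The paper avoids this by arguing at the level of function values rather than subgradients: it uses the $(\frac{1}{\tau_k}+\|Q\|)$-smoothness and $\frac{1}{\tau_k}$-strong convexity of $H^{k+1}(x)=G(x)+\langle Ax,y^{k+1}\rangle+\frac{1}{2\tau_k}\|x-x^k\|^2$ together with the linear convergence of CG to obtain $H(x^{k+1})\leq H(x)-\frac{1}{2\tau_k}\|x-x^{k+1}_*\|^2+\left(\frac{1}{2\tau_k}+\frac{\|Q\|}{2}\right)4\gamma_k^{2N}\|x^k-x^{k+1}_*\|^2$ as in \eqref{primal-bound}; the error coefficient then carries only a single power of $(1+\tau_k\|Q\|)$ and is dominated by the reservoir exactly when $\gamma_K^{N}\leq 1/K$ and $K\geq\gamma_K^{N}\|Q\|/\|A\|$, as verified in \eqref{ass-verify}. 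To keep your route you would need either a stronger hypothesis on $N$ or a different splitting that pays the $(1+\tau_k\|Q\|)$ factor only once. Two smaller points: the $\frac{1}{4\sigma_k}$ (rather than $\frac{1}{2\sigma_k}$) in part (1) comes from the boundary Cauchy--Schwarz on $\eta_k\langle A(x^{k+1}-x^k),y-y^{k+1}\rangle$, not from absorbing the CG error; and the inequality of part (1) is extracted by taking $z$ to be a saddle point so that $\beta_k Q(\tilde z^{k+1},z)\geq 0$ can be dropped, which your plan does correctly anticipate.
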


\begin{proof}
Recall the definition of duality gap and lagrangian function \eqref{eq:primal-dual-formulation}. We have
   \begin{align}\label{main-ineq} \begin{split}
    & \beta_k Q (\tilde{z }^{k + 1}, z) - (\beta_k - 1) Q (\tilde{z }^k,z) \\
    = & \beta_k G (\tilde{x }^{k + 1}) - (\beta_k - 1) G
    (\tilde{x }^k) - G (x) + \beta_k F (\tilde{y }^{k + 1}) - (\beta_k - 1) F (\tilde{y
    }^k) - F (y) \\
    &  + \langle A (\beta_k \tilde{x }^{k + 1} - (\beta_k - 1)
    \tilde{x }^k), y \rangle - \langle A x, \beta_k \tilde{y}^{k + 1} -(\beta_k - 1) \tilde{y }^k \rangle \\
    \leq & G (\beta_k \tilde{x }^{k + 1} - (\beta_k - 1)
    \tilde{x }^k) - G (x) + F (\beta_k \tilde{y }^{k + 1} - (\beta_k - 1) \tilde{y
    }^k) - F (y) + \langle A x^{k + 1}, y \rangle - \langle A x, y^{k + 1} \rangle \\
    = & G (x^{k + 1}) - G (x) + F (y^{k + 1}) - F (y)  + \langle A
    x^{k + 1}, y \rangle - \langle A x, y^{k + 1} \rangle
  \end{split} \end{align}
where the first inequality holds because of the convexity of $F$ and $G$ and the last equality holds because of the fact $\tilde{x}^{k + 1} = (1 - \beta^{- 1}_k) \tilde{x}^k +
  \beta^{- 1}_k x^{k + 1}$ and $\tilde{y}^{k + 1} = (1 - \beta^{- 1}_k)
  \tilde{y}^k + \beta^{- 1}_k y^{k + 1}$. 
Since $y^{k+1}$ is the global minimal of the strong convex problem $F(y) - \langle A \bar{x}^k, y \rangle + \frac{1}{2\sigma_k}\|y - y^k\|^2$, we have
  \[ F(y) + \frac{1}{2\sigma_k}\|y - y^k\|^2 - \langle A \bar{x}^k, y - y^{k + 1}  \rangle - F (y^{k + 1}) - \frac{1}{2\sigma_k}\|y^{k+1} - y^k\|^2 \geq  \frac{1}{2 \sigma_k} \|
     y - y^{k + 1} \|^2, \]
or equivalently,
   \begin{align}
    F (y^{k + 1}) - F (y) \leq & \frac{1}{2 \sigma_k} \| y - y^k \|^2 -
    \frac{1}{2 \sigma_k} \| y - y^{k + 1} \|^2 - \frac{1}{2 \sigma_k} \| y^{t
    + 1} - y^k \|^2  + \langle A \bar{x}^k, y^{k + 1} - y \rangle. \label{dual-opt}
 \end{align}
Let $x^{k+1}_*$ denote exact solution of the primal update, i.e.,
  \begin{equation} \label{def-H}
      x^{k + 1}_{\ast} = \tmop{argmin}_x H^{k + 1} (x) = \tmop{argmin}_x G (x)
     + \langle A x, y^{k + 1} \rangle + \frac{1}{2 \tau_k} \| x - x^k \|^2
  \end{equation}
The primal solution after $N$ steps of CG is denoted by $x^{k + 1}$. Then, the L-smooth property of $H(\cdot)$ implies that
   \begin{align}\begin{split}
    H (x^{k + 1}) & \leq H (x^{k + 1}_{\ast}) + \left( \frac{1}{2 \tau_k}
    + \frac{\| Q \|}{2} \right)  \| x^{k + 1} - x^{k + 1}_{\ast} \|^2
     \\
    & \leq H (x) - \frac{1}{2 \tau_k}  \| x - x^{k + 1}_{\ast} \|^2 +
    \left( \frac{1}{2 \tau_k} + \frac{\| Q \|}{2} \right)\cdot 4 \gamma_k^{2 N} \|
    x^k - x^{k + 1}_{\ast} \|^2 \label{primal-bound}
  \end{split} \end{align}
where the second inequality holds because of the linear convergence of CG algorithm. Based on the definition in \eqref{def-H}, inequality \eqref{primal-bound} is equivalent to
   \begin{align}\begin{split}
    G (x^{k + 1}) \leq & G (x) + \frac{1}{2 \tau_k} \| x - x^k \|^2 -
    \frac{1}{2 \tau_k} \| x^{k + 1} - x^k \|^2  - \frac{1}{2 \tau_k}  \| x - x^{k + 1}_{\ast} \|^2 \\
    &  + \left( \frac{1}{2
    \tau_k} + \frac{\| Q \|}{2} \right) \cdot 4 \gamma_k^{2 N} \| x^k - x^{k +
    1}_{\ast} \|^2 + \langle A (x - x^{k + 1}), y^{k + 1} \rangle \label{primal-bound-2}
  \end{split} \end{align}
We bound the terms $\| x^k - x^{k +
    1}_{\ast} \|^2$ and $-\| x - x^{k +
    1}_{\ast} \|^2$ in the following way:
\[
    \| x^k - x^{k + 1}_{\ast} \|^2 \leq 2 \| x^k - x^{k + 1} \|^2 + 2 \|
    x^{k + 1} - x^{k + 1}_{\ast} \|^2 = 2 \| x^k - x^{k + 1} \|^2 + 8
    \gamma_k^{2 N} \| x^k - x^{k + 1}_{\ast} \|^2,
\]
which implies that
  \begin{equation} \label{ast-bound-1}
      \| x^k - x^{k + 1}_{\ast} \|^2 \leq \frac{2}{1 - 8 \gamma_k^{2 N}} 
     \| x^k - x^{k + 1} \|^2.
  \end{equation}
  Then, for any $x \in \mathbb{R}^n$, the following inequality holds for any $p_k > 0$,
  \[
   \begin{aligned}
    \| x - x^{k + 1} \|^2 \leq & (1 + p_k) \| x - x^{k + 1}_{\ast} \|^2 +
    \left( 1 + \frac{1}{p_k} \right)  \| x^{k + 1} - x^{k + 1}_{\ast} \|^2
     \\
    \leq  & (1 + p_k) \| x - x^{k + 1}_{\ast} \|^2 + 4\left( 1 + \frac{1}{p_k}
    \right) \gamma_k^{2 N} \| x^k - x^{k + 1}_{\ast} \|^2  \\
    \leq & (1 + p_k) \| x - x^{k + 1}_{\ast} \|^2 + 4\left( 1 +
    \frac{1}{p_k} \right)  \frac{8 \gamma_k^{2 N}}{1 - 8 \gamma_k^{2 N}} \|
    x^k - x^{k + 1} \|^2  
\end{aligned}
\]
where the first inequality holds because of the Cauchy-Schwarz inequality, the second inequality holds because of the linear convergence property of CG method, and the last one holds owing to \eqref{ast-bound-1}. The above inequality implies that
  \begin{equation} \label{ast-bound-2}
      - \| x - x^{k + 1}_{\ast} \| \leq - \frac{1}{1 + p_k} \| x - x^{t +
     1} \|^2 + \frac{1}{p_k} \frac{8 \gamma_k^{2 N}}{1 - 8 \gamma_k^{2 N}} \|
     x^k - x^{k + 1} \|^2.
  \end{equation}
  Combining \eqref{ast-bound-1}, \eqref{ast-bound-2}, and \eqref{primal-bound-2} together, we obtain
   \begin{align}\begin{split}
    G (x^{k + 1}) \leq & G (x) + \frac{1}{2 \tau_k} \| x - x^k \|^2 -
    \frac{1}{2 \tau_k} \| x^{k + 1} - x^k \|^2   - \frac{1}{2 \tau_k (1 + p_k)}  \| x - x^{k + 1} \|^2  \\
    & + \frac{4 \gamma_k^{2 N}}{p_k \tau_k (1 - 8 \gamma_k^{2 N})} \| x^k - x^{t +
    1} \|^2 + \left( \frac{1}{\tau_k} + \| Q \| \right)  \frac{4 \gamma_k^{2 N}}{1 -
    8 \gamma_k^{2 N}} \| x^k - x^{k + 1} \|^2 + \langle A (x - x^{k + 1}),
    y^{k + 1} \rangle  \\
    = & G (x) + \frac{1}{2 \tau_k} \| x - x^k \|^2 - \frac{1}{2 \tau_k (1 +
    p_k)}  \| x - x^{k + 1} \|^2  \\
    & - \left( \frac{1}{2 \tau_k} - \frac{4(p_k + 1) \gamma_k^{2 N}}{p_k
    \tau_k (1 - 8 \gamma_k^{2 N})} - \frac{4\gamma_k^{2 N} \| Q \|}{1 - 8
    \gamma_k^{2 N}} \right) \| x^k - x^{k + 1} \|^2 + \langle A (x - x^{t +
    1}), y^{k + 1} \rangle. \label{primal-bound-3}
  \end{split} \end{align}
 Substitute \eqref{dual-opt} and \eqref{primal-bound-3} into \eqref{main-ineq} to get
   \begin{align}\begin{split}
    & \beta_k Q (\tilde{z }^{k + 1}, z) - (\beta_k - 1) Q (\tilde{z }^k,
    z) \\
    \leq & G (x^{k + 1}) - G (x) + F (y^{k + 1}) - F (y) + \langle A
    x^{k + 1}, y \rangle - \langle A x, y^{k + 1} \rangle  \\
    \leq & \frac{1}{2 \tau_k} \| x - x^k \|^2 - \frac{1}{2 \tau_k (1 +
    p_k)}  \| x - x^{k + 1} \|^2 + \frac{1}{2 \sigma_k} \| y - y^k \|^2 - \frac{1}{2 \sigma_k} \| y -
    y^{k + 1} \|^2 - \frac{1}{2 \sigma_k} \| y^{k + 1} - y^k \|^2  \\
    & - \left( \frac{1}{2 \tau_k} - \frac{4(p_k + 1) \gamma_k^{2 N}}{p_k
    \tau_k (1 - 8 \gamma_k^{2 N})} - \frac{4 \gamma_k^{2 N} \| Q \|}{1 - 8
    \gamma_k^{2 N}} \right) \| x^{k + 1} - x^k \|^2 \\
    & + \langle A (x - x^{k + 1}), y^{k + 1} \rangle + \langle A \bar{x}^k,
    y^{k + 1} - y \rangle  + \langle A x^{k + 1}, y \rangle - \langle A x, y^{k + 1} \rangle.
    \label{main-ineq-2}
  \end{split} \end{align}
  For the terms in the last rows of \eqref{main-ineq-2}, we have
   \begin{align}\begin{split}
    & \langle A (x - x^{k + 1}), y^{k + 1} \rangle + \langle A \bar{x}^k,
    y^{k + 1} - y \rangle + \langle A x^{k + 1}, y \rangle - \langle A x, y^{k
    + 1} \rangle  \\
    = & \langle A (x - x^{k + 1}), y^{k + 1} \rangle + \langle A (\theta_k
    (x^k - x^{k - 1}) + x^k), y^{k + 1} - y \rangle + \langle A x^{k + 1}, y
    \rangle - \langle A x, y^{k + 1} \rangle  \\
    = & \langle A (x^{k + 1} - x^k), y - y^{k + 1} \rangle - \theta_k \langle
    A (x^k - x^{k - 1}), y - y^k \rangle - \theta_k \langle A (x^k - x^{k -
    1}), y^k - y^{k + 1} \rangle. \label{main-ineq-last}
  \end{split} \end{align}
  Multiplying both sides of \eqref{main-ineq-2} by $\eta_k$, using the inequality \eqref{main-ineq-last} and the fact that $\eta_k \theta_k =
  \eta_{k - 1}$, we obtain
   \begin{align}\begin{split}
    & \beta_k \eta_k Q (\tilde{z }^{k + 1}, z) - (\beta_k - 1) \eta_k Q
    (\tilde{z }^k, z) \\
    \leq & \frac{\eta_k}{2 \tau_k} \| x - x^k \|^2
    - \frac{\eta_k}{2 \tau_k (1 + p_k)}  \| x - x^{k + 1} \|^2 \label{main-ineq-3} + \frac{\eta_k}{2 \sigma_k} \| y - y^k \|^2 - \frac{\eta_k}{2 \sigma_k}
    \| y - y^{k + 1} \|^2 - \frac{\eta_k}{2 \sigma_k} \| y^{k + 1} - y^k \|^2
     \\
    & - \eta_k \left( \frac{1}{2 \tau_k} - \frac{4(p_k + 1) \gamma_k^{2
    N}}{p_k \tau_k (1 - 8 \gamma_k^{2 N})} - \frac{4\gamma_k^{2 N} \| Q \|}{1 -
    8 \gamma_k^{2 N}} \right) \| x^{k + 1} - x^k \|^2 \\
    & + \eta_k \langle A (x^{k + 1} - x^k), y - y^{k + 1} \rangle - \eta_{k - 1} \langle A (x^k - x^{k - 1}), y - y^k \rangle - \eta_{t -
    1} \langle A (x^k - x^{k - 1}), y^k - y^{k + 1} \rangle.  
  \end{split} \end{align}
  Apply Cauchy-Schwartz inequality to the last term of \eqref{main-ineq-3}
   \begin{align}\begin{split}
    - \eta_{k - 1} \langle A (x^k - x^{k - 1}), y^k - y^{k + 1} \rangle
    \leq & \frac{\| A \|^2 \eta_{k - 1}^2 \sigma_k}{2 \eta_k} \| x^k -
    x^{k - 1} \|^2 + \frac{\eta_k}{2 \sigma_k} \| y^{k + 1} - y^k \|^2
     \\
    = & \frac{\| A \|^2 \eta_{k - 1} \sigma_{k - 1}}{2} \| x^k - x^{k - 1}
    \|^2 + \frac{\eta_k}{2 \sigma_k} \| y^{k + 1} - y^k \|^2 \label{main-ineq-3-last}
  \end{split} \end{align}
  where the last equality holds because $\frac{\eta_k}{\eta_{k + 1}} = \frac{\sigma_k}{\sigma_{k + 1}} = \frac{k + 1}{k + 2}$. Plugging \eqref{main-ineq-3-last} into \eqref{main-ineq-3}, we get 
   \begin{align}\begin{split}  \label{main-ineq-4}
    & (\beta_{k + 1} - 1) \eta_{k + 1} Q (\tilde{z }^{k + 1}, z) - (\beta_k
    - 1) \eta_k Q (\tilde{z }^k, z)\\
    = & \beta_k \eta_k Q (\tilde{z
    }^{k + 1}, z) - (\beta_k - 1) \eta_k Q (\tilde{z }^k, z)\\
    \leq & \frac{\eta_k}{2 \tau_k} \| x - x^k \|^2 - \frac{\eta_{k + 1}}{2
    \tau_{k + 1}}  \| x - x^{k + 1} \|^2 + \frac{\eta_k}{2 \sigma_k} \| y -
    y^k \|^2 - \frac{\eta_{k + 1}}{2 \sigma_{k + 1}} \| y - y^{k + 1} \|^2 \\
    & + \eta_k \langle A (x^{k + 1} - x^k), y - y^{k + 1} \rangle  - \eta_{t -
    1} \langle A (x^k - x^{k - 1}), y - y^k \rangle  \\
    &+ \frac{\| A \|^2 \eta_{k - 1} \sigma_{k - 1}}{2} \| x^k - x^{k - 1} \|^2 - \eta_k \left( \frac{1}{2 \tau_k} - \frac{4(p_k + 1) \gamma_k^{2 N}}{p_k
    \tau_k (1 - 8 \gamma_k^{2 N})} - \frac{4\gamma_k^{2 N} \| Q \|}{1 - 8
    \gamma_k^{2 N}} \right) \| x^{k + 1} - x^k \|^2.  
  \end{split} \end{align}
  Summing up the inequality \eqref{main-ineq-4} from $0$ to $k$ and noting that $\beta_0 = 1$, we get
   \begin{align}\begin{split}
    \beta_k \eta_k Q (\tilde{z }^{k + 1}, z) \leq & \eta_k \langle A
    (x^{k + 1} - x^k), y - y^{k + 1} \rangle + \frac{\eta_0}{2 \tau_0} \| x -
    x^0 \|^2 + \frac{\eta_0}{2 \sigma_0} \| y - y^0 \|^2 - \frac{\eta_{k + 1}}{2 \tau_{k + 1}}  \| x - x^{k + 1} \|^2 \\
    &  -
    \frac{\eta_{k + 1}}{2 \sigma_{k + 1}} \| y - y^{k + 1} \|^2   - \eta_k \left( \frac{1}{2 \tau_k} - \frac{4(p_k + 1) \gamma_k^{2
    N}}{p_k \tau_k (1 - 8 \gamma_k^{2 N})} - \frac{4\gamma_k^{2 N} \| Q \|}{1 -
    8 \gamma_k^{2 N}} \right) \| x^{k + 1} - x^k \|^2  \\
    & - \sum_{i = 0}^{k - 1} \eta_i \left( \frac{1}{2 \tau_i} - \frac{4(p_i +
    1) \gamma_i^{2 N}}{p_i \tau_i (1 - 8 \gamma_i^{2 N})} - \frac{4\gamma_i^{2
    N} \| Q \|}{1 - 8 \gamma_i^{2 N}} - \frac{\| A \|^2 \sigma_i}{2} \right)
    \| x^{i + 1} - x^i \|^2  \\
    \leq & \frac{\eta_0}{2 \tau_0} \| x - x^0 \|^2 + \frac{\eta_0}{2
    \sigma_0} \| y - y^0 \|^2 + \eta_k \langle A (x^{k + 1} - x^k), y - y^{t +
    1} \rangle \label{main-sum-1}\\
    & - \eta_k \left( \frac{1}{2 \tau_k} - \frac{4(p_k + 1) \gamma_k^{2
    N}}{p_k \tau_k (1 - 8 \gamma_k^{2 N})} - \frac{4\gamma_k^{2 N} \| Q \|}{1 -
    9 \gamma_k^{2 N}} \right) \| x^{k + 1} - x^k \|^2 - \frac{\eta_{k + 1}}{2
    \sigma_{k + 1}} \| y - y^{k + 1} \|^2  
  \end{split} \end{align}
  where the last inequality holds because, as we choose $p_k = \frac{1}{k + 1}, \tau_k = \frac{(k + 1)}{2 (\gamma^N_K \| Q \| +
  K \| A \|)}, \sigma_k = \frac{k+1}{2 K \| A \|}$, and $N \geq \log_{\gamma_K} \frac{1}{K}$ or equivalently $\gamma_K^N \leq 1/K$, we have for all $1\leq i \leq K$,
 \begin{align}\begin{split}\label{ass-verify}
    & \frac{1}{2 \tau_i} - \frac{(p_i + 1) \gamma_i^{2 N}}{p_i \tau_i (1 - 2
    \gamma_i^{2 N})} - \frac{\gamma_i^{2 N} \| Q \|}{1 - 2 \gamma_i^{2 N}} -
    \frac{\| A \|^2 \sigma_i}{2}  \\
    = & \frac{\gamma^N_K \| Q \|}{(i + 1)} + \frac{K \| A \|}{(i + 1)} -
    \frac{\gamma_i^{2 N} 2 (\gamma^N_K \| Q \| + K \| A \|)}{(1 - 2
    \gamma_i^{2 N})} - \frac{\gamma_i^{2 N} \| Q \|}{1 - 2 \gamma_i^{2 N}} -
    \frac{\| A \| (i + 1)}{4 K}  \\
    \geq & \left( \frac{K}{(i + 1)} - \frac{2 \gamma_i^{2 N}}{(1 - 2
    \gamma_i^{2 N})} - \frac{i + 1}{4 K} \right) \| A \| + \left( \frac{1}{\gamma^N_K (i + 1)} - \frac{2 \gamma^N_K}{(1 - 2
    \gamma_i^{2 N})} - \frac{1}{1 - 2 \gamma_i^{2 N}} \right) \gamma_i^{2 N}
    \| Q \| \\
    \geq & 0 .
  \end{split} \end{align}
  A similar analysis as \eqref{main-ineq-3-last} shows that 
   \begin{align}\begin{split}\label{ineq:final}
    \eta_k \langle A (x^{k + 1} - x^k), y - y^{k + 1} \rangle 
    \leq & \frac{\| A \|^2 \eta^2_k \sigma_{k + 1}}{2 \eta_{k + 1}} \|
    x^{k + 1} - x^k \|^2 + \frac{\eta_{k + 1}}{2 \sigma_{k + 1}} \| y - y^{t +
    1} \|^2  \\
    = & \frac{\| A \|^2 \eta_k \sigma_k}{2} \| x^{k + 1} - x^k \|^2 +
    \frac{\eta_{k + 1}}{2 \sigma_{k + 1}} \| y - y^{k + 1} \|^2  
  \end{split} \end{align} 
  where the first inequality holds because of the Cauchy-Schwartz inequality and the last inequality holds because of the fact $\frac{\eta_k}{\eta_{k + 1}} = \frac{\sigma_k}{\sigma_{k + 1}} = \frac{k + 1}{k + 2}$. Finally, substitute \eqref{ineq:final} into \eqref{main-sum-1} to get
   \begin{align}\begin{split}
    & \beta_k Q (\tilde{z }^{k + 1}, z) \\
    \leq & \frac{\eta_0}{2 \eta_k
    \tau_0} \| x - x^0 \|^2 + \frac{\eta_0}{2 \eta_k \sigma_0} \| y - y^0 \|^2 - \eta_k \left( \frac{1}{2 \tau_k} - \frac{4(p_k + 1) \gamma_k^{2
    N}}{p_k \tau_k (1 - 8 \gamma_k^{2 N})} - \frac{4\gamma_k^{2 N} \| Q \|}{1 -
    9 \gamma_k^{2 N}} - \frac{\| A \|^2 \sigma_k}{2} \right) \| x^{k + 1} - x^k \|^2
     \\
    \leq  & \frac{1}{2 \tau_k} \| x - x^0 \|^2 + \frac{1}{2 \sigma_k} \| y - y^0
    \|^2  
  \end{split} \end{align}
  which complete the proof of part 1).

  Next, for the proof of part 2), recall the inequality \eqref{main-sum-1}
   \begin{align}\begin{split}
    \beta_k \eta_k Q (\tilde{z }^{k + 1}, z) \leq & \frac{\eta_0}{2
    \tau_0} \| x - x^0 \|^2 + \frac{\eta_0}{2 \sigma_0} \| y - y^0 \|^2
     \\
    & - \frac{\eta_{k + 1}}{2 \tau_{k + 1}}  \| x - x^{k + 1} \|^2 -
    \frac{\eta_{k + 1}}{2 \sigma_{k + 1}} \| y - y^{k + 1} \|^2 + \eta_k
    \langle A (x^{k + 1} - x^k), y - y^{k + 1} \rangle  \\
    & - \eta_k \left( \frac{1}{2 \tau_k} - \frac{4(p_k + 1) \gamma_k^{2
    N}}{p_k \tau_k (1 - 8 \gamma_k^{2 N})} - \frac{4\gamma_k^{2 N} \| Q \|}{1 -
    8 \gamma_k^{2 N}} \right) \| x^{k + 1} - x^k \|^2  \\
    \leq & \frac{\eta_0}{2 \tau_0} \| x - x^0 \|^2 + \frac{\eta_0}{2
    \sigma_0} \| y - y^0 \|^2 - \frac{\eta_{k + 1}}{2 \tau_{k + 1}}  \| x -
    x^{k + 1} \|^2 - \frac{\eta_{k + 1}}{4 \sigma_{k + 1}} \| y - y^{k + 1}
    \|^2  \\
    & - \eta_k \left( \frac{1}{2 \tau_k} - \frac{4(p_k + 1) \gamma_k^{2
    N}}{p_k \tau_k (1 - 8 \gamma_k^{2 N})} - \frac{4\gamma_k^{2 N} \| Q \|}{1 -
    8 \gamma_k^{2 N}} - \| A \|^2 \sigma_k \right) \| x^{k + 1} - x^k \|^2
     \\
    \leq & \frac{\eta_0}{2 \tau_0} \| x - x^0 \|^2 + \frac{\eta_0}{2
    \sigma_0} \| y - y^0 \|^2 - \frac{\eta_{k + 1}}{2 \tau_{k + 1}}  \| x -
    x^{k + 1} \|^2 - \frac{\eta_{k + 1}}{4 \sigma_{k + 1}} \| y - y^{k + 1}
    \|^2.  
  \end{split} \end{align}
  Since we choose $\frac{\eta_0}{\eta_k} = \frac{\sigma_0}{\sigma_k} = \frac{\tau_0}{\tau_k} = \frac{1}{k+1}$ and $\frac{\eta_{k+1}}{\eta_k} = \frac{\sigma_{k+1}}{\sigma_k} = \frac{\tau_{k+1}}{\tau_k} = \frac{k+2}{k+1}$, the above inequality implies that
   \begin{align}\begin{split}
    0 \leq \beta_k Q (\tilde{z }^{k + 1}, z) \leq & \frac{1}{2
    \tau_k} \| x - x^0 \|^2 + \frac{1}{2 \sigma_k} \| y - y^0 \|^2 -
    \frac{1}{2 \tau_k}  \| x - x^{k + 1} \|^2 - \frac{1}{4 \sigma_k} \| y -
    y^{k + 1} \|^2.  
  \end{split} \end{align}
Thus, we complete the proof of part 2).
\end{proof}

\begin{lemma} \label{lemma:fix-2}
  Under the same condition as in \Cref{lemma:fix-1}, we have
  \begin{itemize}
      \item[1)] it holds for any $z \in \mathcal{Z}$ that
  \[ Q (\tilde{z }^K, z) \leq 2 \left( \frac{\gamma^N_K \| Q \|}{K^2}
     + \frac{\| A \|}{K} \right) \| z - z^0 \|^2. \]
      \item[2)] it holds for any $0 \leq t \leq K$ with  $K \geq \frac{\gamma^N_K \| Q \|}{\|
  A \|}$ that
  \[ \frac{1}{2} \| z^k - z^{\ast} \|^2 \leq \| z^0 - z^{\ast} \|^2. \]
  \end{itemize}
\end{lemma}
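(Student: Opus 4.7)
The plan is to derive both bounds as direct consequences of Lemma~\ref{lemma:fix-1} combined with the explicit step-size formulas prescribed in Theorem~\ref{thm:fix}; no new technique is required, only careful bookkeeping of the weights.

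For part 1), I would invoke part 2) of Lemma~\ref{lemma:fix-1} at index $k = K-1$, which directly yields $(K-1)\, Q(\tilde{z}^K, z) \leq \frac{1}{2\tau_{K-1}}\|x - x^0\|^2 + \frac{1}{2\sigma_{K-1}}\|y - y^0\|^2$. Substituting $\tau_{K-1} = K/[2(\gamma_K^N\|Q\| + K\|A\|)]$ and $\sigma_{K-1} = 1/(2\|A\|)$, dividing by $K-1$, and invoking the elementary bound $1/(K-1) \leq 2/K$ (valid for $K \geq 2$), the right-hand side becomes at most $2\bigl(\gamma_K^N\|Q\|/K^2 + \|A\|/K\bigr)\bigl(\|x-x^0\|^2 + \|y-y^0\|^2\bigr)$. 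Recognizing the last factor as $\|z - z^0\|^2$ delivers the claimed estimate.

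For part 2), I would apply part 1) of Lemma~\ref{lemma:fix-1} at index $k-1$ with $(x,y) = z^* \in \mathcal{Z}^*$, using the stronger $+$ form actually established inside the proof, namely $\frac{1}{2\tau_{k-1}}\|x^* - x^k\|^2 + \frac{1}{4\sigma_{k-1}}\|y^* - y^k\|^2 \leq \frac{1}{2\tau_{k-1}}\|x^* - x^0\|^2 + \frac{1}{2\sigma_{k-1}}\|y^* - y^0\|^2$. The main obstacle is the asymmetry between the primal and dual weights: their ratio is $\sigma_{k-1}/\tau_{k-1} = 1 + \gamma_K^N\|Q\|/(K\|A\|)$, and the hypothesis $K \geq \gamma_K^N\|Q\|/\|A\|$ is precisely what confines this ratio to the interval $[1, 2]$. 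After multiplying the inequality by $2\tau_{k-1}$ to normalize the primal weight and using these bounds on the weight ratio, both $\|x^* - x^k\|^2$ and $\|y^* - y^k\|^2$ can be simultaneously controlled by a constant multiple of $\|x^* - x^0\|^2 + \|y^* - y^0\|^2 = \|z^0 - z^*\|^2$, yielding the stated contraction. This step is where the condition $K \geq \gamma_K^N\|Q\|/\|A\|$ explicitly enters, which is unsurprising since this condition is exactly what decouples the analysis from the ill-conditioning of $Q$.
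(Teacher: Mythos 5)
Your proposal is correct and follows essentially the same route as the paper: part 1) is part 2) of Lemma~\ref{lemma:fix-1} at the last inner index with the step-size formulas substituted (your evaluation at $k=K-1$ is just slightly cleaner index bookkeeping than the paper's), and part 2) is the ``$+$'' form of part 1) of Lemma~\ref{lemma:fix-1} at $z=z^\ast$, normalized by $2\tau$ and controlled via the weight ratio $\tau/\sigma\in[1/2,1]$, exactly as in the paper. One shared caveat: since $\tau_{k}/(2\sigma_{k})$ is only bounded below by $1/4$, this argument (the paper's included) really yields $\|z^{k}-z^{\ast}\|^{2}\leq 4\,\|z^{0}-z^{\ast}\|^{2}$ rather than the stated factor $2$, but that weaker constant is precisely what the induction in the proof of Theorem~\ref{thm:fix} uses, so nothing downstream breaks.
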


\begin{proof}
Part 2) of Lemma \ref{lemma:fix-1} implies that
     \begin{align}\begin{split}
    \beta_K Q (\tilde{z }^{K + 1}, z) \leq &  \frac{1}{2 \tau_K} \| x - x^0 \|^2 + \frac{1}{2 \sigma_K} \| y - y^0
    \|^2  \leq \frac{\gamma^N_K \| Q \| + K \| A \|}{K + 1} \| x - x^0 \|^2 + \| A
    \| \| y - y^0 \|^2. 
  \end{split} \end{align}
  Noting that $\beta_K = K + 1$, we get
   \begin{align}\begin{split}
    Q (\tilde{z }^K, z) \leq & \frac{\gamma^N_K \| Q \| + K \| A
    \|}{K^2} \| x - x^0 \|^2 + \frac{\| A \|}{K} \| y - y^0 \|^2
    \leq  2 \left( \frac{\gamma^N_K \| Q \|}{K^2} + \frac{\| A \|}{K}
    \right) \| z - z^0 \|^2.  
  \end{split} \end{align}
  So we complete the proof of part 1). Similarly, based on part 1) of Lemma \ref{lemma:fix-1}, we have
  \[
  \frac{1}{2 \tau_k}  \| x - x^{k + 1} \|^2 + \frac{1}{4 \sigma_k} \| y -
    y^{k + 1} \|^2 \leq \frac{1}{2\tau_k} \| x - x^0 \|^2 + \frac{1}{2 \sigma_k} \| y - y^0 \|^2
  \]
  and
  \[  \| x - x^{k + 1} \|^2 + \frac{\tau_k}{2 \sigma_k} \| y - y^{k + 1} \|^2
     \leq \| x - x^0 \|^2 + \frac{\tau_k}{\sigma_k} \| y - y^0 \|^2. \]
  Then, the inequality $K \geq \frac{\gamma^N_K \| Q \|}{\|
  A \|}$ implies that
   \begin{align}\begin{split}
    \frac{1}{2} \leq \frac{K \| A \|}{\gamma^N_K \| Q \| + K \| A \|}
    \leq \frac{\tau_k}{\sigma_k} = & \frac{K \| A \|}{\gamma^N_K \| Q \|
    + K \| A \|} \leq 1  
  \end{split} \end{align}
  Thus, the following inequality holds for any $0 \leq k
  \leq K - 1$,
  \[ \frac{1}{2} \| z - z^{k + 1} \|^2 \leq \| x - x^{k + 1} \|^2 +
     \frac{\tau_k}{2 \sigma_k} \| y - y^{k + 1} \|^2 \leq \| x - x^0 \|^2
     + \frac{\tau_k}{\sigma_k} \| y - y^0 \|^2 \leq \| z - z^0 \|^2. \]
  By choosing $z = z^{\ast}$, we complete the proof of part 2).
\end{proof}

\begin{lemma} \label{lemma:fix-3}
  Under the same condition as in \Cref{lemma:fix-1}, suppose the total iteration number $K$ satisfies
  \[ K \geq \max \left\{ \frac{4 \| A \|}{\xi}, \sqrt{\frac{4 \gamma^N_K
     \| Q \|}{\xi}} \right\}. \]
  It holds for any $\dot{z} \in \mathcal{Z}$ that
  \[ G_{\xi} (\tilde{z}, \dot{z}) \leq 4 \left( \frac{\gamma^N_K \| Q
     \|}{K^2} + \frac{\| A \|}{K} \right) \| z^0 - \dot{z} \|^2. \]
\end{lemma}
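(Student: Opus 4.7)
The plan is to reduce the smoothed duality gap to an explicit concave quadratic maximization in $z$, then apply the bound on $K$ to control the maximum. The structure of the argument closely mirrors the proof of \Cref{lemma:ada-3} in the adaptive setting, except that the coefficient coming from the primal-dual upper bound on $Q(\tilde{z}, z)$ now involves both $\|A\|/K$ and $\gamma_K^N\|Q\|/K^2$ rather than just $\|A\|/K$.

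First, I would invoke part (1) of \Cref{lemma:fix-2}, which yields for every $z \in \mathcal{Z}$ the bound
\[ Q(\tilde{z}, z) \leq 2C\|z - z^0\|^2, \qquad C := \frac{\gamma_K^N \|Q\|}{K^2} + \frac{\|A\|}{K}. \]
Substituting into the definition of the smoothed duality gap gives
\[ G_\xi(\tilde{z}, \dot{z}) = \max_{z \in \mathcal{Z}}\bigl\{Q(\tilde{z}, z) - \xi\|z - \dot{z}\|^2\bigr\} \leq \max_{z}\bigl\{2C\|z - z^0\|^2 - \xi\|z - \dot{z}\|^2\bigr\}, \]
where the constraint $z \in \mathcal{Z}$ can be dropped since the inner objective is an upper bound that extends to all of $\mathbb{R}^{n+m}$.

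Next, I would verify that the hypothesis $K \geq \max\{4\|A\|/\xi,\ \sqrt{4\gamma_K^N\|Q\|/\xi}\}$ forces the coefficient $2C$ to lie strictly below $\xi$ (indeed, each summand of $C$ is at most $\xi/4$), so the quadratic in $z$ is strictly concave. Setting the gradient to zero gives the unique maximizer
\[ z^\star = \frac{\xi\dot{z} - 2C z^0}{\xi - 2C}, \]
and a direct substitution (using $z^\star - z^0 = \tfrac{\xi(\dot{z}-z^0)}{\xi - 2C}$ and $z^\star - \dot{z} = \tfrac{2C(z^0 - \dot{z})}{\xi - 2C}$) produces the closed-form maximum value $\tfrac{2C\xi}{\xi - 2C}\|z^0 - \dot{z}\|^2$.

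Finally, I would use the lower bound $\xi - 2C \geq \xi/2$ guaranteed by the choice of $K$ to conclude
\[ G_\xi(\tilde{z}, \dot{z}) \leq \frac{2C\xi}{\xi/2}\|z^0 - \dot{z}\|^2 = 4C\|z^0 - \dot{z}\|^2 = 4\left(\frac{\gamma_K^N\|Q\|}{K^2} + \frac{\|A\|}{K}\right)\|z^0 - \dot{z}\|^2. \]
The main technical nuisance—rather than a genuine obstacle—is bookkeeping the two-sided condition on $K$ so that both terms of $C$ contribute at most $\xi/4$, ensuring the $\xi - 2C \geq \xi/2$ slack that controls the denominator. The argument is otherwise a direct transplant of the $Q = 0$ case treated in \Cref{lemma:ada-3}.
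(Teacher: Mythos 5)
Your approach is essentially the paper's: invoke part (1) of \Cref{lemma:fix-2} to bound $Q(\tilde z,z)$ by $2C\|z-z^0\|^2$ with $C=\gamma_K^N\|Q\|/K^2+\|A\|/K$, substitute into the definition of $G_\xi$, maximize the resulting concave quadratic in closed form, and use the condition on $K$ to control the denominator. The only structural difference is cosmetic: the paper first weakens $-\xi\|z-\dot z\|^2$ to $-\tfrac{\xi}{2}\|z-\dot z\|^2$ before maximizing, whereas you keep the full $-\xi$; both routes land on the same kind of ratio bound.

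There is, however, a constant-tracking slip in your last step. As you note, the hypothesis on $K$ gives each summand of $C$ at most $\xi/4$, hence only $C\le\xi/2$ and $\xi-2C\ge 0$; your claim that $\xi-2C\ge\xi/2$ requires $C\le\xi/4$, which would need $K\ge 8\|A\|/\xi$ and $K^2\ge 8\gamma_K^N\|Q\|/\xi$. In the borderline case $2C=\xi$ your maximum $\tfrac{2C\xi}{\xi-2C}\|z^0-\dot z\|^2$ is not even finite, so the step genuinely fails as written rather than merely losing a constant. To be fair, the paper's own proof has the same defect (it asserts $2C<\xi/2$ and $\xi-4C\ge\xi/2$, which likewise do not follow from the stated lower bound on $K$); in both cases the fix is to tighten the constants in the hypothesis on $K$ (or accept a larger constant in the conclusion), and nothing downstream is sensitive to the precise factor. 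So: same method, with a shared, repairable constant inconsistency that you should make explicit rather than assert.
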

\begin{proof}
  By lemma \ref{lemma:fix-2}, for any $z \in \mathcal{Z}$, we have
  \[ G_{\xi} (\tilde{z}, \dot{z}) =  Q (\tilde{z}, z) - \xi \| z - \dot{z} \|^2 \leq 2 \left(
     \frac{\gamma^N_K \| Q \|}{K^2} + \frac{\| A \|}{K} \right) \| z^0 - z
     \|^2 - \frac{\xi}{2} \| z - \dot{z} \|^2. \]
  With the choice $K \geq \max \left\{ \frac{4 \| A \|}{\xi},
  \sqrt{\frac{4 \gamma^N_K \| Q \|}{\xi}} \right\}$, we have $2 \left(
  \frac{\gamma^N_K \| Q \|}{K^2} + \frac{\| A \|}{K} \right) < \frac{\xi}{2}$,
  thus the right hand side attain its maximum at $z = \frac{\frac{\xi}{2}
  \dot{z} - 2 \left( \frac{\gamma^N_K \| Q \|}{K^2} + \frac{\| A \|}{K}
  \right) z^0}{\frac{\xi}{2} - 2 \left( \frac{\gamma^N_K \| Q \|}{K^2} +
  \frac{\| A \|}{K} \right)}$. Therefore, we have
  \[ G_{\xi} (\tilde{z}, \dot{z}) \leq \frac{2 \xi \left(
     \frac{\gamma^N_K \| Q \|}{K^2} + \frac{\| A \|}{K} \right)}{\xi - 4
     \left( \frac{\gamma^N_K \| Q \|}{K^2} + \frac{\| A \|}{K} \right)} \| z^0
     - \dot{z} \|^2 \leq 4 \left( \frac{\gamma^N_K \| Q
     \|}{K^2} + \frac{\| A \|}{K} \right) \| z^0 - \dot{z} \|^2 \]
  where the last inequality holds because $\xi - 4 \left( \frac{\gamma^N_K \| Q \|}{K^2} + \frac{\| A \|}{K}
  \right) \geq \frac{\xi}{2}$. So we complete the proof.
\end{proof}

Now, we are ready to prove \Cref{thm:fix}.

\begin{proof}[Proof of \Cref{thm:fix}]
  We complete the proof by mathematical induction. The case $n = 0$ is trival. Suppose it holds for $n \leq D$. Denote by $z^{\ast}_n = \tmop{argmin}_{z \in \mathcal{Z}^{\ast}} \| z^{n, 0} -
  z \|^2$ the projection of $z^{n,0}$ onto the optimal set $\mathcal{Z}^*$. 
   \[\begin{aligned}
    \| z^{D + 1, 0} - z^{0, 0} \| \leq & \sum_{n = 0}^D \| z^{n + 1, 0} -
    z^{n, 0} \| \leq \sum_{n = 0}^D \| z^{n + 1, 0} - z^{\ast}_n \| + \|
    z^{\ast}_n - z^{n, 0} \| \leq \sum_{n = 0}^D 3 \| z^{\ast}_n - z^{n,
    0} \|  \\
    = & 3\sum_{n = 0}^D \tmop{dist} (z^{n, 0}, \mathcal{Z}^{\ast})
    \leq 3 \sum_{n = 0}^D e^{- n} \tmop{dist} (z^{0, 0} -
    \mathcal{Z}^{\ast}) \leq \frac{3}{1 - 1 / e} \tmop{dist} (z^{0, 0},
    \mathcal{Z}^{\ast})  
  \end{aligned}\]
  where the first and second inequality hold because of the triangle inequality, the third inequality use part 2) in \Cref{lemma:fix-2}, and the fourth inequality holds because the mathematical induction assume \Cref{thm:fix} holds for all $n \leq D$. This implies that $z^{D + 1, 0}$ is in set $B_R (z^{0, 0})$ and thus
  This implies that $z^{D + 1, 0}$ is in set $B_R (z^{0, 0})$ and thus
  $\alpha_{\xi}$ is a valid constant of quadratic growth for iterates $z^{D +
  1, 0}$.
   \[\begin{aligned}
    \tmop{dist}^2 (z^{D + 1, 0}, \mathcal{Z}^{\ast}) & \leq
    \frac{1}{\alpha_{\xi}} G_{\xi} (z^{D + 1, 0}, z^{\ast}_D) =
    \frac{1}{\alpha_{\xi}} G_{\xi} (z^{D, K}, z^{\ast}_D)  \\
    & \leq \frac{4}{\alpha_{\xi}} \left( \frac{\gamma^N_K \| Q \|}{K^2}
    + \frac{\| A \|}{K} \right) \| z^{D, 0} - z^{\ast}_D \|^2 =
    \frac{4}{\alpha_{\xi}} \left( \frac{\gamma^N_K \| Q \|}{K^2} + \frac{\| A
    \|}{K} \right) \tmop{dist}^2 (z^{D, 0}, \mathcal{Z}^{\ast})  \\
    & \leq e^{- 2} \tmop{dist}^2 (z^{D, 0}, \mathcal{Z}^{\ast})
    \leq e^{- 2 (D + 1)} \tmop{dist}^2 (z^{0, 0}, \mathcal{Z}^{\ast})
  \end{aligned}\]
where the first inequality holds because of the quadratic growth property, the second inequality holds because of \Cref{lemma:fix-3}, and the third inequality holds because of the choice of $K$. So we complete the proof.
\end{proof}







  



\end{document}